\newtheorem{theorem}{Theorem}[section]
\newtheorem{lemma}{Lemma}[section]
\newtheorem{prop}{Proposition}[section]
\newtheorem{alg}{Algorithm}[section]
\newcommand{\abs}[1]{\left\lvert#1\right\rvert}
\newcommand{\ceil}[1]{\lceil#1\rceil}
\newcommand{\floor}[1]{\lfloor#1\rfloor}
\newcommand\PTE{\textsc{PTE}}
\newcommand\UFD{\textsc{UFD}}
\newcommand\TS{\rule{0pt}{2.6ex}}
\newcommand\BS{\rule[-1.2ex]{0pt}{0pt}}
\newcommand{\gfextn}[0]{pdf}
\DeclareMathOperator{\ord}{ord}
\newenvironment{iolist}[1]%
{\begin{list}{}{%
\settowidth{\labelwidth}{\textsf{{\it #1.}}}%
\setlength{\labelsep}{4mm}%
\setlength{\leftmargin}{\labelwidth}%
\addtolength{\leftmargin}{\labelsep}%
}}%
{\end{list}}
\newenvironment{biglabellist}[1]%
{\begin{list}{}{%
\settowidth{\labelwidth}{\textsf{{\it #1.}}}%
\setlength{\labelsep}{2mm}%
\setlength{\leftmargin}{\labelwidth}%
\addtolength{\leftmargin}{\labelsep}%
\addtolength{\leftmargin}{4mm}%
\setlength{\itemsep}{6pt}%
\setlength{\listparindent}{0pt}%
\setlength{\topsep}{3pt}%
}}%
{\end{list}}
\begin{document}

\title{Ideal Solutions in the Prouhet--Tarry--Escott problem}

\dedicatory{Dedicated to the memory of Peter Borwein}

\author[D. Coppersmith]{Don Coppersmith}
\address{Center for Communications Research, Princeton, NJ USA}
\author[M.~J. Mossinghoff]{Michael J. Mossinghoff}
\address{Center for Communications Research, Princeton, NJ USA}
\email{m.mossinghoff@idaccr.org}
\author[D. Scheinerman]{Danny Scheinerman}
\address{Center for Communications Research, Princeton, NJ USA}
\email{daniel.scheinerman@gmail.com}
\author[J.~M. VanderKam]{Jeffrey M. VanderKam}
\address{Center for Communications Research, Princeton, NJ USA}
\email{vanderkm@idaccr.org}

\date{\today}
\subjclass[2010]{Primary: 11D72, 11Y50; Secondary: 11D79, 11P05, 11R11}
\keywords{Prouhet--Tarry--Escott problem, multigrades}

\begin{abstract} 
For given positive integers $m$ and $n$ with $m<n$, the
\textit{Prouhet--Tarry--Escott problem} asks if there exist two disjoint
multisets of integers of size $n$ having identical $k$th moments for $1\leq
k\leq m$; in the \textit{ideal} case one requires $m=n-1$, which is
maximal.
We describe some searches for ideal solutions to the
Prouhet--Tarry--Escott problem, especially solutions possessing a
particular symmetry, both over $\mathbb{Z}$ and over the ring of integers
of several imaginary quadratic number fields.
Over $\mathbb{Z}$, we significantly extend searches for symmetric ideal
solutions at sizes $9$, $10$, $11$, and $12$, and we conduct extensive
searches for the first time at larger sizes up to $16$.
For the quadratic number field case, we find new ideal solutions of sizes
$10$ and $12$ in the Gaussian integers, of size $9$ in
$\mathbb{Z}[i\sqrt{2}]$, and of sizes $9$ and $12$ in the Eisenstein
integers.
\end{abstract}
                                                                                
\maketitle

\section{Introduction}\label{secIntro}

The \textit{Prouhet--Tarry--Escott} problem is a classical problem in
Diophantine analysis.
In its original form, it asks for disjoint multisets of integers
$A=\{a_1,\ldots,a_n\}$ and $B=\{b_1,\ldots,b_n\}$ having identical $k$th
moments for all positive integers $k$ up to a given value $m$:
\begin{equation}\label{eqnPTE1}
\sum_{i=1}^n a_i^k = \sum_{i=1}^n b_i^k,\quad 1\leq k\leq m.
\end{equation}
The integer $n$ is known as the \textit{size} of the problem, and $m$ its
\textit{degree}.
This is easily seen to be equivalent to either of the following two
conditions involving polynomials one may construct from $A$ and $B$:
\begin{gather}
\label{eqnPTE2}
\deg\left(\prod_{i=1}^n (x-a_i) - \prod_{i=1}^n (x-b_i)\right) < n-m,\\
(x-1)^{m+1} \;\Big\vert\; \sum_{i=1}^n \left(x^{a_i} - x^{b_i}\right). \nonumber
\end{gather}
From \eqref{eqnPTE2} we see that the largest possible value for the degree
$m$ is $n-1$: the case where equality is achieved here is known as the
\textit{ideal} case of the Prouhet--Tarry--Escott problem.
We focus on the ideal case in this article.

This problem in various forms dates back more than $250$ years: a special
case arose in correspondence between Euler and Goldbach around 1750, where
it was noted that $A=\{a,b,c,a+b+c\}$ and $B=\{0,a+b,a+c,b+c\}$ forms a
solution of degree $2$.
Tarry and Escott studied the more general problem in the 1910s, obtaining a
number of results on it, and consequently the problem is often referred to
as the \textit{Tarry--Escott} problem in the literature.
Since Wright \cite{Wright59} pointed out a contribution of Prouhet from 1851
on this topic, the latter name is usually attached as well in current
treatments of this topic.
We follow this and refer to the problem as the Prouhet--Tarry--Escott
problem, or \PTE\ problem for short.
Other names occur in the literature as well: Hua \cite{Hua} labels it ``the
problem of Prouhet and Tarry,'' Dickson \cite{Dickson} as ``equal sums of
like powers,'' and a number of authors (e.g., \cite{Smyth91}) refer to
solutions of \eqref{eqnPTE1} as \textit{multigrades}.
For additional background on this problem, we refer the reader to
\cite[ch.~11]{BorweinCEANT}, \cite{BI}, \cite{Chernick},
\cite[ch.~24]{Dickson}, \cite{DB37}, \cite{Gloden}, or
\cite[\S18.7]{Hua}.

It is easy to verify that if $A$ and $B$ form a solution to the \PTE\
problem with size $n$ and degree $m$, then any nontrivial affine 
transformation of these sets $rA+s$, $rB+s$ also forms such a solution, so
one may normalize solutions by requiring for instance that $0$ is an
element of one of the sets, as in the Euler--Goldbach example.

Much study of the \PTE\ problem concentrates on the special class of
\textit{symmetric} solutions.
When $n$ is even, a symmetric solution has the property that $A=-A$ and
$B=-B$, so that all of the odd moments of $A$ and $B$ are $0$ and thus
automatically match.
When $n$ is odd, a symmetric solution has $B=-A$, so that all of the even
moments of these two sets automatically match.

Ideal solutions in the \PTE\ problem over $\mathbb{Z}$ are known for $n\leq
10$ and $n=12$.
Infinite families of such solutions are known in each of these cases except
$n=9$, where just two solutions are known up to affine equivalence, both
found by Letac in 1942 \cite{Letac42} (see also
\cite[pages~47--48]{Gloden}):
\begin{equation}\label{eqnPTE9}
\begin{split}
A &= \{-98, -82, -58, -34, 13, 16, 69, 75, 99\},\; B=-A;\\
A &= \{-174, -148, -132, -50, -8, 63, 119, 161, 169\},\; B=-A.
\end{split}
\end{equation}
See \cite{BorweinCEANT}, as well as \cite{Choudhry17} and its references,
for information on parameterized families of ideal solutions (and symmetric
ideal solutions) with size $n\leq8$.

For $n=10$, Smyth \cite{Smyth91} combined a construction of Letac with the
theory of elliptic curves to determine an infinite family of non-equivalent
symmetric ideal solutions of this size.
Most of the solutions produced by this method contain members that are
quite large, and since then some smaller solutions have been found using
computational strategies.
In 2002, Borwein, Lisonek, and Percival \cite{BLP} developed an algorithm
that found two symmetric ideal solutions at $n=10$, both smaller than any
prior known solution by two orders of magnitude:
\begin{equation}\label{eqnPTE10}
\begin{split}
A &= \pm\{99,100,188,301,313\},\; B=\pm\{71,131,180,307,308\};\\
A &= \pm\{103,189,366,452,515\},\; B=\pm\{18,245,331,471,508\}.
\end{split}
\end{equation}
They also proved that no symmetric ideal solutions with size $n=11$ exist
with height at most $2000$ (by the \textit{height} of a solution we mean
its largest element in absolute value), and no additional symmetric ideal
solutions with size $n=9$ exist up to this same height, beyond those
equivalent to one of the solutions~\eqref{eqnPTE9}.

The first ideal solution with size $n=12$ was discovered in 1999 by Kuosa,
Meyrignac, and Shuwen, announced on a website maintained by Meyrignac.
They found the symmetric solution
\begin{equation}\label{eqnPTE12a}
A=\pm\{22,61,86,127,140,151\},\; B=\pm\{35,47,94,121,146,148\}.
\end{equation}
Borwein et al.\ \cite{BLP} verified that no other symmetric solutions of
this size exist with height at most $1000$.
A second symmetric solution was discovered by Broadhurst in 2007
\cite{Broadhurst}:
\begin{equation}\label{eqnPTE12c}
\begin{split}
A&=\pm\{257,891,1109,1618,1896,2058\},\\
B&=\pm\{472,639,1294,1514,1947,2037\}.
\end{split}
\end{equation}
In 2008, Choudhry and Wr\'ob{\l}ewski \cite{CW} employed an elliptic
curve to construct an infinite family of symmetric ideal \PTE\ solutions at
$n=12$, and used their parameterization to search for small solutions at
this size.
They found six more solutions with height less than $10^{10}$, the smallest
of which is
\begin{equation}\label{eqnPTE12b}
\begin{split}
A&=\pm\{107,622,700,1075,1138,1511\},\\
B&=\pm\{293,413,886,953,1180,1510\};
\end{split}
\end{equation}
the next-smallest has height $14770$.

The \PTE\ problem has also been investigated in other domains.
In 2007, Alpers and Tijdeman \cite{AT07} introduced a multi-dimensional
version of this problem, concentrating on the case $\mathbb{Z}^2$.
They also suggested investigating this problem over the Gaussian integers
$\mathbb{Z}[i]$, noting that solutions occur there that are not equivalent
to solutions over $\mathbb{Z}$, nor inherited in a natural way from their
work over $\mathbb{Z}^2$.
A few years later, Caley  \cite{Caley12,Caley13} developed the \PTE\
problem over the ring of integers of a number field, especially when this
ring is a unique factorization domain (\UFD).
This work included adapting the method of \cite{BLP} to search for ideal
solutions with size $n\leq12$ in the Gaussian integers, and Caley presented
new solutions over this ring for $6\leq n\leq 10$.
His solutions for $n=10$ appear in Section~\ref{secCompQ}.

In this article, we investigate the \PTE\ problem, both in the classical
setting over $\mathbb{Z}$, and over certain imaginary quadratic number
fields where the ring of integers is a \UFD\@.
We develop a new method for searching over $\mathbb{Z}$, and use this to
search for symmetric ideal solutions with sizes between $n=9$ and $n=16$.
For sizes $n=9$ through $n=12$, our searches cover a significantly larger
space compared to the work of \cite{BLP}.
For example, at $n=9$ we complete a search for solutions with height at most
$7000$, compared to $2000$ in \cite{BLP}.
(See Table~\ref{tableZComp} in Section~\ref{subsecOddSearches} for a
comparison of height bounds in the other cases.)
For larger $n$, it appears that no significant searches had been performed
in prior work.
No new integral solutions are found for $9\leq n\leq 16$, beyond ones
equivalent to known configurations.
We also remark on some interesting solutions found that consist mostly of
integers, but also contain a few algebraic irrationals of small degree.

Our detection of certain ``near misses'' in the \PTE\ problem, where
solutions contain mostly integers, along with a few algebraic integers from
a quadratic extension of $\mathbb{Q}$, provides some motivation toward
investigating the \PTE\ problem in some simple number fields.
If our searches over $\mathbb{Z}$ uncovered solutions in the ring of
integers of number fields such as $\mathbb{Q}(\sqrt{14})$ and
$\mathbb{Q}(\sqrt{231})$, perhaps we can find solutions in simple rings
like the Gaussian or Eisenstein integers using methods specifically
engineered for such settings.

In the setting of imaginary quadratic number fields, we concentrate on the
cases $\mathbb{Q}(i\sqrt{d})$ with $d\in\{1,2,3,7,11,19\}$, so this
includes the Gaussian integers and the Eisenstein integers (where $d=3$).
Here, we identify some additional local structure for symmetric ideal \PTE\
solutions, and use this to conduct more extensive searches over the
Gaussian integers compared to \cite{Caley13}, as well as new searches over
the other five rings.
We also identify some families of ideal solutions in local settings, which
present obstructions to establishing certain arithmetic information about
solutions over these number fields.

This article is organized in the following way.
Section~\ref{secDivis} defines the \textit{constant} in an ideal \PTE\
solution, and describes some arithmetic requirements regarding its value in
ideal and symmetric ideal solutions.
Section~\ref{secLocal} establishes some families of ideal solutions in the
\PTE\ problem in local settings, which obstruct certain additional
arithmetic properties in this constant.
Aided by knowledge of this constant and of local \PTE\ solutions,
Section~\ref{secCompZ} then describes our algorithm for searching for
symmetric ideal solutions over the integers in the \PTE\ problem, and the
results of our searches with $9\leq n\leq 16$, including some solutions
consisting of integers plus a small number of algebraic irrationals of low
degree.
After this, Section~\ref{secDivisQ} determines local information on the
\PTE\ constant over certain imaginary quadratic number fields.
Finally, Section~\ref{secCompQ} describes our algorithms for searching for
symmetric ideal solutions in this setting, and summarizes the results of
our searches there with $9\leq n\leq 16$.
Our results include new solutions over the Gaussian integers at $n=10$ and
$n=12$, over $\mathbb{Z}[i\sqrt{2}]$ at $n=9$, and over the Eisenstein
integers at $n=9$ and $n=12$.

\section[Divisibility Requirements in $\mathbb{Z}$]{Divisibility Requirements in \protect{\boldmath{$\mathbb{Z}$}}}\label{secDivis}

If $A$ and $B$ comprise an ideal solution of size $n$ in the
\PTE\ problem, then the polynomial in~\eqref{eqnPTE2} has degree $0$, and
we denote the value of this constant by $C_n(A,B)$:
\begin{equation}\label{eqnCnZ}
C_n(A,B) = \prod_{i=1}^n (x-a_i) - \prod_{i=1}^n (x-b_i).
\end{equation}
This integer is highly composite, since for each $j=1,\ldots,n$ one has
\begin{equation}\label{eqnCn}
C_n(A,B) = \prod_{i=1}^n (b_j-a_i) = -\prod_{i=1}^n (a_j-b_i).
\end{equation}
In fact, it is known that $C_n(A,B)$ must possess certain arithmetic
properties.
To describe these, it is convenient to define
\[
C_n = C_n(\mathbb{Z}) = \gcd\{C_n(A,B) : \textrm{$(A,B)$ an
ideal \PTE\ solution over $\mathbb{Z}$ of size $n$}\}.
\]
and
\begin{equation*}
\begin{split}
C_n' = C_n'(\mathbb{Z}) = \gcd\{C_n(A,B) :\; &\textrm{$(A,B)$ a
symmetric ideal \PTE}\\
&\quad\textrm{solution over $\mathbb{Z}$ of size $n$}\}.
\end{split}
\end{equation*}
We refer to $C_n$ (and $C_n'$) as \textit{the constant} associated with an
ideal (respectively symmetric ideal) \PTE\ solution of size $n$ over
$\mathbb{Z}$.
We describe some specific arithmetic requirements for these values below.

\subsection{General Requirements}\label{subsecGenlReqs}

In 1975, Kleiman~\cite{Kleiman} proved that $(n-1)!\mid C_n$, and in 1990
Rees and Smyth~\cite{ReesSmyth} established a number of additional facts
regarding required divisors of this constant.
We summarize these in the following proposition.

\begin{prop}[Rees and Smyth \cite{ReesSmyth}]\label{propRS}
Let $C_n$ denote the constant for an ideal \PTE\ solution of size $n$, and
let $p$ denote a prime number.
\begin{enumerate}[(i)]
\item If $k$ is a positive integer and $p<n/k$ then $p^{k+1}\mid C_n$.
\item If $p\geq5$ then $p\mid C_p$.
\item If $n+2\leq p < n+2+\frac{n-3}{6}$ then $p\mid C_n$.
\item $16\mid C_5$, $32\mid C_6$, $11\mid C_7$, $11\cdot13\mid C_8$,
$13\mid C_9$, and $17\mid C_{11}$.
\end{enumerate}
\end{prop}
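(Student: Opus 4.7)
The approach combines two ingredients. The first is the pair of identities
\[
C_n(A,B) = (-1)^n\bigl(e_n(A)-e_n(B)\bigr), \qquad n\,C_n(A,B) = s_n(B)-s_n(A),
\]
where $s_k,e_k$ denote the $k$th power sum and elementary symmetric function; these follow from the matching of power sums through degree $n-1$ together with Newton's identities, which force $e_k(A)=e_k(B)$ for $1\le k\le n-1$. The second is the reduction of $A,B$ modulo $p$: writing $m_j^A,m_j^B$ for the multiplicities of residue $j\in\{0,\dots,p-1\}$, the matching of $s_0,s_1,\dots,s_{p-1}$ translates mod $p$ to the linear system
\[
\sum_{j=0}^{p-1} j^k\bigl(m_j^A-m_j^B\bigr) \equiv 0 \pmod p \qquad (k=0,1,\dots,p-1),
\]
whose coefficient matrix is Vandermonde on distinct elements of $\mathbb{F}_p$ and hence invertible. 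Therefore $m_j^A\equiv m_j^B\pmod p$ for all $j$, provided $n\ge p$.

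For (i), pigeonhole on the $n>kp$ elements of $A$ among $p$ residue classes produces some $j^*$ with $m_{j^*}^A\ge k+1$. If $m_{j^*}^B\ge 1$, picking any $b\in B$ with $b\equiv j^*\pmod p$ makes $k+1$ of the factors in $C_n(A,B)=\prod_i(b-a_i)$ divisible by $p$, giving $p^{k+1}\mid C_n$. The degenerate case $m_{j^*}^B=0$, which via the Vandermonde congruence forces $m_{j^*}^A$ to be a multiple of $p$ and hence $\ge p$, is either dispatched by iterating on the remaining classes or by invoking Kleiman's bound $(n-1)!\mid C_n$, which handles small $p$ with large exponent. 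For (ii), applied with $n=p$, the only obstruction to $m_j^A=m_j^B$ is the configuration $(m_{j_0}^A,m_{j_0}^B)=(p,0)$ and $(m_c^A,m_c^B)=(0,p)$ for some $c\ne j_0$. After the affine shift $x\mapsto x-j_0$ one has $a_i'=pr_i$ and $b_i'=c'+pt_i$ with $c':=c-j_0$ a unit modulo $p$; reducing the equalities $s_2(A')=s_2(B')$ and $s_3(A')=s_3(B')$ mod $p$ yields $\sum_i t_i\equiv -c'/2\equiv -c'/3\pmod p$, forcing $c'/6\equiv 0\pmod p$ and hence a contradiction when $p\ge 5$. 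Thus $\prod_i(x-a_i)\equiv\prod_i(x-b_i)\pmod p$ and $p\mid C_p$.

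For (iii), the identity $n\,C_n=s_n(B)-s_n(A)$ together with $p\nmid n$ reduces the claim to showing $s_n(A)\equiv s_n(B)\pmod p$. Fermat's theorem lets one rewrite $a^n\pmod p$ as $a^{n\bmod(p-1)}$ for $a$ coprime to $p$; for $p=n+2$ specifically, $a^n\equiv a^{-1}$ and one obtains $s_n(A)\equiv e_{n-1}(A)/e_n(A)\pmod p$, and likewise for $B$. Combined with $e_{n-1}(A)=e_{n-1}(B)$ and a Newton-identity expansion, the difference $s_n(A)-s_n(B)$ becomes a controlled multiple of $e_n(A)-e_n(B)$; the narrow upper bound $p<n+2+(n-3)/6$ is precisely what keeps the binomial-coefficient correction terms small enough that the resulting identity is nontrivial and yields $p\mid C_n$. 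For the next few values of $p$ in the range, an analogous argument using $a^n\equiv a^{n-(p-1)}$ applies with an extra layer of bookkeeping.

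Finally, (iv) consists of prime-by-prime reinforcements of these arguments modulo $p^a$ for the appropriate $a$, each using a Newton-identity expansion of $s_n$ to higher precision together with explicit parametrizations of ideal solutions at the specified small $n$. The main obstacle throughout is controlling the degenerate multiplicity configurations in which a single residue class absorbs a block of $A$ or $B$: these are exactly where the Vandermonde/pigeonhole step loses its grip, and where one must track power sums modulo higher prime powers, in the spirit of the $s_2,s_3$ analysis used for (ii), in order to eliminate them.
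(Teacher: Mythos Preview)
Your treatment of (i) and (ii) is along the right lines; the Vandermonde/pigeonhole step and the $s_2,s_3$ contradiction for the all-in-one-class configuration in (ii) are essentially the classical arguments. The degenerate branch in (i) is hand-waved but salvageable.

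The real problem is (iii), and by extension (iv). The paper does not give its own proof of Proposition~\ref{propRS} but makes explicit that parts (iii) and the odd-prime cases of (iv) are consequences of the \emph{Multiplicity Lemma} (Lemma~\ref{lemRSML}): if $p\nmid C_n$ then there is a polynomial $h\in\mathbb{F}_p[x]$ of degree exactly $p-n-1$ with $\sum_{a}|\langle h(a)\rangle_p|\le 2n$, and one shows this bound fails for every such $h$ when $p$ lies in the stated window. The bound $p<n+2+(n-3)/6$ is precisely the range in which the minimum of $\sum_a|\langle h(a)\rangle_p|$ over polynomials of the relevant low degree exceeds $2n$; it is a combinatorial fact about sums of centred residues and has no evident connection to Newton's identities. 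Your Fermat/Newton route for $p=n+2$ only yields, assuming $p\nmid e_n(A)e_n(B)$ and $p\nmid C_n$, the congruence
\[
n \equiv (-1)^n\,\frac{e_{n-1}}{e_n(A)\,e_n(B)} \pmod p,
\]
which is a constraint on $A,B$, not a contradiction; nothing in your sketch forces it to fail, and nothing explains why the window closes at $(n-3)/6$. The phrase ``keeps the binomial-coefficient correction terms small enough'' is doing work that is nowhere justified.

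For (iv), the paper again points to the Multiplicity Lemma: one enumerates polynomials $h$ of the appropriate degree over $\mathbb{F}_p$ and checks the inequality computationally, which is how $11\mid C_7$, $11\cdot 13\mid C_8$, $13\mid C_9$, $17\mid C_{11}$ are obtained. Your description (``explicit parametrizations of ideal solutions at the specified small $n$'') is aimed in the wrong direction---one needs a statement valid for \emph{all} ideal solutions, not particular families---and does not resemble the actual method. The $2$-adic statements $16\mid C_5$, $32\mid C_6$ require separate elementary arguments, which you do not supply either.
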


The third statement in this lemma, as well as most of the fourth, are
deduced from what the authors call the \textit{Multiplicity Lemma}.
This records a requirement for a local analogue of an ideal solution in the
\PTE\ problem.
We record its statement here for the convenience of the reader.
We first require some definitions.
For a polynomial $q\in\mathbb{F}_p[x]$ and $a\in\mathbb{F}_p$, let
$\nu_a(q)$ represent the multiplicity of the root at $x=a$ of $q(x)$.
Also, for an integer $a$ and prime $p$, let $\langle a\rangle_p$ denote
the integer congruent to $a$ mod $p$ in $(-p/2,p/2]$.

\begin{lemma}[Multiplicity Lemma \cite{ReesSmyth}]\label{lemRSML}
If $p$ is prime and $p>n$, $q_1$ and $q_2$ are monic polynomials in
$\mathbb{F}_p[x]$ that split completely over $\mathbb{F}_p$, and $q_1-q_2$
is a nonzero constant $C\not\equiv0\bmod p$, then there exists a
polynomial $h\in\mathbb{F}_p[x]$ having degree exactly $p-n-1$ that
satisfies
\[
h(a) \equiv \nu_a(q_1) - \nu_a(q_2) \pmod p
\]
for each $a\in\mathbb{F}_p$.
Furthermore,
\[
\sum_{a=0}^{p-1} \abs{\langle h(a)\rangle_p} \leq 2n.
\]
\end{lemma}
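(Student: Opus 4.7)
The plan is to construct $h$ explicitly via Lagrange interpolation over $\mathbb{F}_p$, exploiting the identity $q_1'=q_2'$, which holds because $q_1-q_2=C$ is constant. Since $q_1$ and $q_2$ are monic of the same degree $n$ (their difference is a constant, so their leading terms must agree) and split completely, the logarithmic derivative identity yields
\[
\sum_{a\in\mathbb{F}_p}\frac{\nu_a(q_1)-\nu_a(q_2)}{x-a}=\frac{q_1'(x)}{q_1(x)}-\frac{q_2'(x)}{q_2(x)}=\frac{q_1'(x)\bigl(q_2(x)-q_1(x)\bigr)}{q_1(x)q_2(x)}=\frac{-C\,q_1'(x)}{q_1(x)q_2(x)}
\]
as an identity of rational functions.

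Next I would invoke Lagrange interpolation, using $\prod_{a\in\mathbb{F}_p}(x-a)=x^p-x$ together with the useful identity $\prod_{b\neq a}(a-b)=-1$, which follows from $(x^p-x)'\equiv -1\pmod{p}$. The unique polynomial of degree below $p$ interpolating the function $f(a)=\nu_a(q_1)-\nu_a(q_2)$ on $\mathbb{F}_p$ is then
\[
h(x)=-(x^p-x)\sum_{a\in\mathbb{F}_p}\frac{f(a)}{x-a}=\frac{C(x^p-x)\,q_1'(x)}{q_1(x)q_2(x)}.
\]
I would verify that this rational expression is genuinely a polynomial by inspecting each potential pole at $a\in\mathbb{F}_p$: since $C\neq 0$, $q_1$ and $q_2$ share no root, so at a root $a$ of $q_1$ of multiplicity $k\leq n<p$ the derivative $q_1'$ vanishes to order exactly $k-1$, and combined with the simple zero of $x^p-x$ this cancels the order-$k$ pole contributed by $q_1q_2$; the argument for roots of $q_2$ is symmetric.

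Reading off degrees, the numerator has degree $p+n-1$ and the denominator has degree $2n$, so $\deg h\leq p-n-1$, and the leading coefficient works out to $Cn$, which is nonzero in $\mathbb{F}_p$ because $C\not\equiv 0\pmod{p}$ and $0<n<p$; hence $\deg h=p-n-1$ exactly. For the final inequality, the key observation is that $|\langle m\rangle_p|\leq|m|$ for every integer $m$, since $\langle m\rangle_p$ is the representative of $m$ closest to $0$. Applying this pointwise gives
\[
\sum_{a=0}^{p-1}\bigl|\langle h(a)\rangle_p\bigr|\leq\sum_{a\in\mathbb{F}_p}\bigl|\nu_a(q_1)-\nu_a(q_2)\bigr|\leq\sum_{a\in\mathbb{F}_p}\bigl(\nu_a(q_1)+\nu_a(q_2)\bigr)=2n,
\]
the last equality because each $q_i$ has exactly $n$ roots in $\mathbb{F}_p$ counted with multiplicity.

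I do not expect a serious obstacle. The one point that warrants care is confirming that the closed-form expression for $h$ is actually a polynomial, which is precisely where both hypotheses do real work: $p>n$ ensures that multiplicities do not interact badly with the characteristic (so $q_1'$ vanishes to the expected order at every multiple root), and $C\not\equiv 0$ forces $q_1$ and $q_2$ to have disjoint root sets. The remainder of the argument is essentially a degree count plus the elementary inequality on $|\langle\cdot\rangle_p|$.
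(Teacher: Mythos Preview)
Your argument is correct. The paper does not supply its own proof of this lemma; it is quoted from Rees and Smyth \cite{ReesSmyth} and stated without proof, so there is no in-paper argument to compare against. Your construction via the logarithmic derivative and Lagrange interpolation is clean, and the one subtle point---that the closed form $C(x^p-x)q_1'(x)/(q_1(x)q_2(x))$ is genuinely a polynomial at roots of $q_2$---is handled correctly once one remembers that $q_1'=q_2'$, so the ``symmetric'' claim is justified even though only $q_1'$ appears explicitly.
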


This result may be used to establish required divisors in the \PTE\ constant
$C_n$: if there exists an ideal \PTE\ solution $(A,B)$ of size $n$ with\linebreak 
$p\nmid C_n(A,B)$, then there exists $h\in\mathbb{F}_p[x]$ as in the lemma.
Thus, if $p>n$ and $\sum_{a=0}^{p-1} \abs{\langle h(a)\rangle_p} > 2n$ for
every polynomial $h\in\mathbb{F}_p[x]$ with degree $p-n-1$, then
necessarily $p\mid C_n$.
Rees and Smyth employed this method to establish the statements regarding
odd prime divisors in $C_n$ for $n\in\{7,8,9,11\}$ in
Proposition~\ref{propRS}, and in his Ph.D. thesis Caley \cite{Caley12} used
it to show that $17 \mid C_{10}$, $19 \mid C_{11}$, and $17\cdot19 \mid
C_{12}$.
Caley also proved there that $2^6\mid C_7$ with a separate elementary
argument.
More recently, Filaseta and Markovich \cite{FM17} used Newton polygons to
obtain $2$-adic information on these constants, and showed that
$2^6 \mid C_8$ and $2^9 \,\|\, C_9$.

We employed the computational strategy of Rees and Smyth in a C++ program
to prove that $23 \mid C_{10}$; prior to this, it was known only that
$23\mid C_{10}'$.
For this computation, using symmetries noted by Caley \cite{Caley12}, it
sufficed to test all polynomials $h(x)$ with degree $12$ in
$\mathbb{F}_{23}[x]$ having linear coefficient $0$ or $1$.
We found that the minimal value of $\sum_{a=0}^{22} \abs{\langle
h(a)\rangle_p}$ over this set is $24$, achieved by $92$ polynomials,
including $x^{12} + x^{10} + x^8 - 7x^6 + 5x^4 - 4x^2$.

We also used this method to determine a number of required divisors in
ideal solutions in the \PTE\ problem for $13\leq n\leq 20$.
A summary of the known required divisors in the constant for ideal
solutions for a number of sizes $n$ is displayed in the second column of
Table~\ref{tableReqDivZ}.
New information determined here, including the determination that $23\mid
C_{10}$, as well as a number of required factors for sizes $n\geq13$, is shown
in boxed form in this table.

\subsection{Additional Requirements for Symmetric Solutions}\label{subsecSymm}

We can deduce additional arithmetic requirements for $C_n'$.
Let $p$ be a prime number, and suppose $n$ is odd.  If every symmetric
ideal solution $(A,-A)$ of size $n$ over $\mathbb{F}_p$ has $A\equiv
-A\bmod p$, then, in view of \eqref{eqnCn}, we may conclude that
$p\mid C_n'$.  Similarly, if $n$ is even, and every symmetric ideal
solution $(A,B)$ of size $n$ over $\mathbb{F}_p$ (so where $A=-A$ and
$B=-B$) has $A\equiv B\bmod p$, then we conclude $p\mid C_n'$.
Borwein et al.~\cite{BLP} used this strategy to deduce the required
divisors shown in the third column of Table~\ref{tableReqDivZ} for
$n=11$, $12$, and $13$.

We determined the additional divisors for $C_n'$ for the cases $14\leq
n\leq20$ with some new searches.
We describe our strategy here when $n$ is odd; the even case is similar.
We wish to determine if there exists a multiset $A$ drawn from
$\mathbb{F}_p$ with the property that $A\neq -A$, and whose odd moments $m$
from $1$ to $n-2$ are all $0$ mod $p$.
A few observations allow us to trim the search space substantially.
Suppose $A=\{a_1,\ldots,a_n\}$ has the desired properties, and $b\in A\cap
-A$.
If $b=0$ then $x$ is an algebraic factor of the polynomial
\[
\prod_{i=1}^n (x-a_i) - \prod_{i=1}^n(x+a_i),
\]
and if $b\neq0$ then $x^2-b^2$ is a factor.
In either case, since this polynomial has degree $0$ in view of the
assumption that $A$ is a symmetric ideal solution in \PTE\ mod $p$, the
remaining cofactor must be identically $0$ mod $p$.
Let $A'=A\setminus\{0\}$ in the first case and $A\setminus\{b,-b\}$ in the
second (removing just a single copy of each element in the case they occur
several times).
It follows that if $b'\in A'$ then $-b'\in A'$, so $A=-A$.
We can therefore assume that $A$ and $-A$ are disjoint in our search, and
in particular that $0\not\in A$.

We can trim the search space further by using the fact that a \PTE\
solution remains a solution after being multiplied by a nonzero constant.
It follows readily that we may assume $1\in A$.   
Some additional work shows that we can often also assume that $\pm2\notin
A$: suppose that $A$ is a symmetric solution of size $n$ with the property
that $2x\in A$ whenever $x\in A$, with matching multiplicities.
Then $\ord_p(2)\mid n$, and by normalizing we can assume that $A$ contains
the consecutive powers of $2$ mod $p$ beginning with $1$, all with the same
multiplicity.
(This multiplicity must in fact be $1$, since otherwise we would have a set
of at most $n/3$ elements with too many vanishing odd moments in view of
\eqref{eqnPTE2}.)
Thus, if $\ord_p(2)\nmid n$, then we may assume that $\pm2\not\in A$.
If $\ord_p(2) = n$, then we need to test the set $\{1,2,4,\ldots,2^{n-1}\}$
separately before searching with $\pm2\not\in A$.
Solutions of this form can arise: at $n=11$ with $p=23$, we have
$\ord_{23}(2)=11$, and the set $A=\{1,2,3,4,6,8,9,12,13,16,18\}$ consisting
of the powers of $2$ mod $23$ forms a symmetric ideal solution in this
local setting.

If $\ord_p(2)$ is a proper divisor of $n$, then one needs to include $2$ in
the search.
(One needs at least a separate search where $A$ is seeded with the powers
of $2$ mod $p$.)
Solutions in this case with $2A\subseteq A$ can occur.
For example, at $n=15$ the set $A=\{1,2,4,5,7,8,9,10,14,16,18,19,20,25,28\}$
forms a symmetric ideal \PTE\ solution mod $31$ with this structure: here
$\ord_{31}(2)=5$.
However, in most cases we can eliminate $\pm2$ after a short check.

In a similar way, when $n$ is even, we can assume that $A$ and $B$ are
disjoint, $A=-A$, $B=-B$, $B$ occurs lexicographically after $A$, and $1\in
A$ (unless $A$ is entirely $0$, in which case $1\in B$).

We employed two strategies in these searches: an exhaustive search for
local solutions subject to the constraints listed here, and a
meet-in-the-middle approach, which was significantly faster.
For the latter, suppose $n$ is odd.
We first construct all multisets $C$ of size $\floor{n/2}$ and store
their odd moments up to $n-2$.
We then consider all multisets $D$ of size $\ceil{n/2}$, checking if any
have odd moments that are the negations mod $p$ of those of a stored
multiset.
If this occurs, then the odd moments of interest of $A=C\cup-D$ are all
$0$, and we can check if $A\neq-A$ mod $p$.
A similar method applies when $n$ is even, where we check that the even
moments of $C$ and $D$ match, and test if $A=C\cup -C$ and $B=D\cup
-D$ differ mod $p$.

For some larger primes $p$ we also considered a variant of the last method
which allowed us to exclude some additional primes as potential required
divisors.
If a solution $A=C\cup-D$ of odd size $n$ over $\mathbb{F}_p$
exists, we can certainly require that the largest (least nonnegative)
residue in $C$ is less than or equal to the smallest one in $D$.
One might expect that values across $\{1,\ldots,p-1\}$ would appear in a
typical solution $A$, so we can use our meet-in-the-middle approach with
$C$ and $D$ drawn respectively from sets of small and large residues mod
$p$.
By selecting bounding residue values near $p/2$, we employ much smaller
search spaces.
This strategy allowed us to exclude some cases very quickly, and pointed to
some open cases where it appears likely that no solutions exist, and hence
indicated that some potential additional prime divisors of $C_n'$ may be
required.
For example, at $n=17$ with $p=71$, this method found that no solutions
exist where $\max C \leq 38$ and $\min D \geq 32$, so it appears likely
that $71$ is a required divisor for symmetric ideal solutions of size $17$.
The last column in Table~\ref{tableReqDivZ} displays five cases where we
obtain evidence of additional required divisors in this way.

\begin{table}[tbp]
\caption{Required divisors for $C_n(\mathbb{Z})$, and additional
required divisors for $C_n'(\mathbb{Z})$.
Boldface values were established using computation.
Entries combine information from \cite{Caley12}, \cite{Caley13}, \cite{FM17},
\cite{Kleiman}, \cite{ReesSmyth}, plus new values determined here, which are
boxed.
The last column lists some additional primes where it appears likely that
these are also required, but we do not have proof.}\label{tableReqDivZ}
\begin{center}
\scalebox{.80}{
\begin{tabular}{|c|c|c|c|}\hline
\TS$n$ & Divisors of $C_n$ & Additional for $C_n'$ & Conjec.\\\hline
\TS 3 & $2^2$ & &\\
4 & $2^2 \cdot 3^2$ & &\\
5 & $2^4 \cdot 3^2 \cdot 5 \cdot 7$ & &\\
6 & $2^5 \cdot 3^2 \cdot 5^2$ & &\\
7 & $2^6 \cdot 3^3 \cdot 5^2 \cdot 7 \cdot \mathbf{11}$ & $\mathbf{19}$ &\\
8 & $2^6 \cdot 3^3 \cdot 5^2 \cdot 7^2 \cdot \mathbf{11 \cdot 13}$
& &\\
9 & $2^9 \cdot 3^3 \cdot 5^2 \cdot 7^2 \cdot 11 \cdot \mathbf{13}$ & &\\
10 & $2^7 \cdot 3^4 \cdot 5^2 \cdot 7^2 \cdot 13 \cdot \mathbf{17 \cdot}\;
\boxed{\mathbf{23}}$ & &\\
11 & $2^8 \cdot 3^4 \cdot 5^3 \cdot 7^2 \cdot 11 \cdot 13 \cdot \mathbf{17
\cdot 19}$ & $\mathbf{31}$ &\\
12 & $2^8 \cdot 3^4 \cdot 5^3 \cdot 7^2 \cdot 11^2 \cdot \mathbf{17 \cdot
19}$ & $\mathbf{29}$ &\\
13 & $2^{10} \cdot 3^5 \cdot 5^3 \cdot 7^2 \cdot 11^2 \cdot 13 \cdot
\boxed{\mathbf{17 \cdot 19 \cdot 23}}$ & $\mathbf{29 \cdot 31 \cdot
37 \cdot 41}$ &\\
14 & $2^{10} \cdot 3^5 \cdot 5^3 \cdot 7^2 \cdot 11^2 \cdot 13^2 \cdot 17
\cdot \boxed{\mathbf{19 \cdot 23}}$ & $\boxed{\mathbf{31 \cdot 37}}$ &\\
15 & $2^{11} \cdot 3^5 \cdot 5^3 \cdot 7^3 \cdot 11^2 \cdot 13^2 \cdot 17
\cdot \boxed{\mathbf{19 \cdot 23}}$ & $\boxed{\mathbf{37 \cdot 41 \cdot 43
\cdot 47}}$ &\\
16 & $2^{11} \cdot 3^6 \cdot 5^4 \cdot 7^3 \cdot 11^2 \cdot 13^2 \cdot 19
\cdot \boxed{\mathbf{23}}$ & $\boxed{\mathbf{29 \cdot 37 \cdot 41 \cdot 43
\cdot 53}}$ &\\
17 & $2^{15} \cdot 3^6 \cdot 5^4 \cdot 7^3 \cdot 11^2 \cdot 13^2 \cdot 17
\cdot 19 \cdot \boxed{\mathbf{23 \cdot 29}}$ & $\boxed{\mathbf{31 \cdot 37
\cdot 41 \cdot 43 \cdot 47 \cdot 53}}$ & $\mathbf{71}$\\
18 & $2^{15} \cdot 3^6 \cdot 5^4 \cdot 7^3 \cdot 11^2 \cdot 13^2 \cdot 17^2
\cdot \boxed{\mathbf{23 \cdot 29}}$ & $\boxed{\mathbf{31 \cdot 41 \cdot 43
\cdot 47 \cdot 59}}$ &\\
19 & $2^{16} \cdot 3^8 \cdot 5^4 \cdot 7^3 \cdot 11^2 \cdot 13^2 \cdot 17^2
\cdot 19 \cdot 23 \cdot \boxed{\mathbf{29 \cdot 31}}$ & $\boxed{\mathbf{41
\cdot 43 \cdot 47 \cdot 53 \cdot 59}}$\ & $\mathbf{61 \cdot 67}$\\
\BS 20 & $2^{16} \cdot 3^8 \cdot 5^4 \cdot 7^3 \cdot 11^2 \cdot 13^2 \cdot 17^2
\cdot 19^2 \cdot 23 \cdot \boxed{\mathbf{29 \cdot 31}}$ & $\boxed{\mathbf{37
\cdot 43 \cdot 47 \cdot 53}}$ & $\mathbf{67 \cdot 71}$\\\hline
\end{tabular}}
\end{center}
\end{table}

\section{Local Solutions}\label{secLocal}

Table~\ref{tableReqDivZ} shows that small primes are occasionally skipped
in the list of required prime divisors in the \PTE\ constant $C_n$.
For example, $p=11$ and $p=19$ are conspicuously missing at $n=10$, and
$p=29$ is omitted at $n=14$.
In fact, we see that $p$ never arises as a required factor for $C_n$ in
this table if $p>n$ and $p\equiv \pm1\bmod n$.
We prove this is always the case by constructing symmetric ideal solutions
in the \PTE\ problem in the local case when $p$ satisfies this constraint.
These then provide obstructions to particular divisibility requirements in
the \PTE\ constant.
Some of these constructions involve local analogues of the Chebyshev
polynomials, so we first require some facts regarding these polynomials.

Recall the Chebyshev polynomial of the first kind $T_n(x) \in \mathbb{Z}[x]$ is
defined by
\begin{equation}\label{eqnChebDef}
T_n(\cos(\theta)) = \cos(n\theta).
\end{equation}
A close relative of this polynomial, often appearing in the setting of finite
fields, is the \textit{Dickson polynomial}, defined by
\begin{equation}\label{eqnDickson}
D_n(x) = \sum_k (-1)^k \frac{n}{n-k} \binom{n-k}{k} x^{n-2k}.
\end{equation}
These polynomials satisfy
\begin{equation}\label{eqnDicksonProp}
D_n(x + x^{-1}) = x^n + x^{-n},
\end{equation}
providing an analogue of \eqref{eqnChebDef}.
These two polynomials are related via $2T_n(x) = D_n(2x)$, and it is more
convenient at times to employ the latter notation.
(We remark that the Dickson polynomials are more generally defined with an
additional parameter $\alpha$: one changes the $(-1)^k$ term in
\eqref{eqnDickson} to $(-\alpha)^k$.
We require only $\alpha = 1$ here so we omit this parameter.)

The following two lemmas establish that certain constant shifts of
Chebyshev polynomials split completely in particular settings.

\begin{lemma}\label{lemChebSplit1}
Let $T_n(x)$ denote the $n$th Chebyshev polynomial, and let $p \equiv \pm1
\bmod n$.
Then $T_n(x)-1$ splits completely in $\mathbb{F}_p[x]$.
\end{lemma}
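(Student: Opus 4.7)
The plan is to exploit the explicit parametrization of the roots of $T_n(x)-1$ coming from the Dickson-polynomial identity already stated in the excerpt, and then to use Frobenius to verify that these roots actually live in $\mathbb{F}_p$.

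Concretely, I would first translate the problem to one about $n$th roots of unity. Since $2T_n(x) = D_n(2x)$ and $D_n(y+y^{-1}) = y^n + y^{-n}$, substituting $2x = y+y^{-1}$ gives
\[
2T_n(x) - 2 \;=\; D_n(y+y^{-1}) - 2 \;=\; y^n + y^{-n} - 2 \;=\; \frac{(y^n-1)^2}{y^n}.
\]
Thus a root $x$ of $T_n(x)-1$ corresponds (via a quadratic extension in $y$) to an $n$th root of unity $y$, and conversely every such $y$ produces a root $x = (y+y^{-1})/2$.

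Next I would assume $p$ is odd (so that dividing by $2$ is legal; if $p=2$ the hypothesis $p\equiv\pm1\pmod n$ forces $n\in\{1,3\}$, which can be checked by hand) and split into the two congruence cases. If $p\equiv 1\pmod n$, then $n\mid p-1$ so $\mathbb{F}_p^*$ contains all $n$ roots of $y^n=1$, and $x = (y+y^{-1})/2$ lies in $\mathbb{F}_p$ for each such $y$. If $p\equiv -1\pmod n$, then $n\mid p+1$ so $n\mid p^2-1$ and the $n$th roots of unity live in $\mathbb{F}_{p^2}$; writing $\varphi$ for the Frobenius $t\mapsto t^p$ and using $y^p = y^{p\bmod n} = y^{-1}$, one gets
\[
\varphi(x) \;=\; \frac{y^p + y^{-p}}{2} \;=\; \frac{y^{-1}+y}{2} \;=\; x,
\]
so again $x\in\mathbb{F}_p$. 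In either case every root of $T_n(x)-1$ in the algebraic closure actually sits in $\mathbb{F}_p$, which is what splitting completely means.

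The only subtlety, and the place where I would need to be a little careful, is matching multiplicities so as to account for all $n$ roots with multiplicity. The map $y\mapsto(y+y^{-1})/2$ is generically two-to-one, identifying $y$ with $y^{-1}$, the exceptions being $y=\pm1$. This matches the well-known multiplicity pattern of $T_n(x)-1$ over $\mathbb{C}$ (simple roots at $x=1$ and, when $n$ is even, at $x=-1$; double roots elsewhere), and since these identifications occur inside $\mathbb{F}_p$ they transport cleanly to the reduction. So the count of roots, with multiplicities, is exactly $n$ and the factorization is complete. I do not expect a real obstacle here; the Frobenius computation in the case $p\equiv -1\pmod n$ is the only step that uses anything beyond bookkeeping.
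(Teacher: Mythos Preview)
Your argument is correct and follows essentially the same route as the paper: both parametrize the roots of $T_n(x)-1$ by $n$th roots of unity via the Dickson identity $D_n(y+y^{-1})=y^n+y^{-n}$, treat the two congruence classes $p\equiv\pm1\pmod n$ separately, and in the $-1$ case use that $y\mapsto y^{-1}$ under Frobenius to land $x=(y+y^{-1})/2$ in $\mathbb{F}_p$. The only cosmetic difference is that the paper verifies the double-root multiplicities by differentiating (obtaining $(x^2-1)D_n'(x)=n(x^n-x^{-n})$), whereas you read them off directly from the squared factor in $2T_n(x)-2=(y^n-1)^2/y^n$; both are equally valid.
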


\begin{proof}
Suppose first $p \equiv 1\bmod n$.
Write $p = rn + 1$.
Let $a$ be a primitive root mod $p$, and let $b = a^r$.
Since
\[
D_n(b^k + 1/b^k) = 2,
\]
we have that $T_n((b^k+1/b^k)/2) = 1$ for $0\leq k < n$.
This specifies $1 + (n-1)/2$ distinct roots.
We claim all but $x = 1$ (that is, the cases $ 1\leq k < n$) are in fact
double roots.
For this, differentiate \eqref{eqnDicksonProp} to obtain
\[
(x^2-1) D_n'(x) = n(x^n - 1/x^n).
\]
(This is essentially the fact that $T_n'(x) = nU_{n-1}(x)$, where
$U_{n-1}(x)$ is the Chebyshev polynomial of the second kind; recall
$U_{n-1}(\cos(t)) = \sin(nt)/\sin(t)$.)
Thus each $b^k$ with $1\leq k<n$ is a root of $D_n'(x)$: note $-1$ cannot
be among our original solutions.
Thus, $D_n(x)-2$ splits completely mod $p$.

Next, suppose $p \equiv -1\bmod n$.
Write $p = rn - 1$.
Let $a$ be a primitive element of $\mathbb{F}_{p^2}^*$, and let $b =
a^{r(p-1)}$.
Then
\[
D_n(b^k + 1/b^k) = 2
\]
for each $k$, and so $T_n((b^k+1/b^k)/2) = 1$ for $0\leq k < n$.
Now $b^k + 1/b^k$ is a member of the base field $\mathbb{F}_p$ so again we
have $(n+1)/2$ distinct roots of $T_n(x) - 1$ mod $p$.
The fact that $b^k + 1/b^k$ is a double root for $1\leq k \leq (n-1)/2$
follows as above.
\end{proof}

\begin{lemma}\label{lemChebSplit2}
Let $T_n(x)$ denote the $n$th Chebyshev polynomial, let $p \equiv \pm1 \bmod n$,
and suppose $x_0^n + 1/x_0^n = 2c$ for some $x_0\in\mathbb{F}_p^*$ and 
$c\in\mathbb{F}_p$.
Then $T_n(x) - c$ splits completely $\bmod p$.
Furthermore, if $p > n+1$ then at least two such values of $c$ exist.
\end{lemma}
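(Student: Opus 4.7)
My plan is to adapt the strategy of Lemma~\ref{lemChebSplit1}: starting from an element $x_0\in\mathbb{F}_p^*$ with $x_0^n + x_0^{-n} = 2c$, I produce $n$ candidate roots of $T_n(x) - c$ in $\mathbb{F}_p$ by multiplying $x_0$ by the $n$th roots of unity. Let $b$ denote a primitive $n$th root of unity, lying in $\mathbb{F}_p$ when $p \equiv 1 \bmod n$ and in $\mathbb{F}_{p^2}$ with $b^p = b^{-1}$ when $p \equiv -1 \bmod n$, exactly as in the proof of the previous lemma. For $0 \le k < n$ set
\[
\xi_k = \frac{x_0 b^k + x_0^{-1} b^{-k}}{2}.
\]
Substituting $y = x_0 b^k$ into $D_n(y + y^{-1}) = y^n + y^{-n}$ and using $b^n = 1$ gives $2T_n(\xi_k) = x_0^n + x_0^{-n} = 2c$, so $\xi_k$ is a root of $T_n(x) - c$. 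Membership $\xi_k \in \mathbb{F}_p$ is immediate in the first case; in the second case it follows because Frobenius fixes $x_0^{\pm1} \in \mathbb{F}_p$ and swaps $b^k \leftrightarrow b^{-k}$, so $\xi_k$ is Frobenius-fixed.

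To conclude that $T_n(x) - c$ splits, I would argue that this multiset of $n$ candidates realizes the full root divisor of $T_n(x) - c$. The cleanest bookkeeping comes from the rational identity
\[
2 y^n \bigl(T_n((y + y^{-1})/2) - c\bigr) = y^{2n} - 2c y^n + 1 = (y^n - \alpha)(y^n - \alpha^{-1}),
\]
where $\alpha$ is a root of $z^2 - 2cz + 1$, together with the fact that $y \mapsto (y + y^{-1})/2$ is a degree-two cover ramified only at $y = \pm 1$. When $c^2 \neq 1$, the sets $\{x_0 b^k\}$ and $\{x_0^{-1} b^{-k}\}$ are disjoint, pair off across the involution $y \leftrightarrow y^{-1}$, and yield $n$ distinct simple roots. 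When $c^2 = 1$ the candidates can coincide, but the derivative identity $T_n'(\xi)(y - y^{-1}) = n(y^n - y^{-n})$ shows that every $\xi_k$ with $y \neq \pm 1$ is a multiple root, and a short count confirms the total multiplicity is $n$. The sub-case $c = 1$ is Lemma~\ref{lemChebSplit1}, and $c = -1$ with $n$ odd reduces to it via $T_n(-x) = -T_n(x)$.

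For the final assertion, $c = 1$ is always attained by taking $x_0 = 1$, so it suffices to exhibit one more value. The equation $(x_0^n + x_0^{-n})/2 = 1$ is equivalent to $(x_0^n - 1)^2 = 0$, so the set of $x_0 \in \mathbb{F}_p^*$ producing $c = 1$ has cardinality $\gcd(n, p-1)$. This gcd equals $n$ when $p \equiv 1 \bmod n$ and is at most $2$ when $p \equiv -1 \bmod n$; in either scenario $p > n+1$ forces $p - 1 > \gcd(n, p-1)$, so some $x_0 \in \mathbb{F}_p^*$ has $x_0^n \neq 1$, yielding a value $c \neq 1$.

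The main delicacy I anticipate is the multiplicity bookkeeping at $c = \pm 1$, where the $\xi_k$ need not be distinct. However, this essentially duplicates the counting already carried out in Lemma~\ref{lemChebSplit1}, so I expect the argument to go through quickly once the rational identity above is in hand.
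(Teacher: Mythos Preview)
Your proof is correct and, for the splitting assertion, follows essentially the same route as the paper: both produce the candidate roots $\xi_k = \tfrac{1}{2}(x_0 b^k + x_0^{-1}b^{-k})$, verify they lie in $\mathbb{F}_p$ (you make the Frobenius argument explicit, which the paper leaves implicit by reference to Lemma~\ref{lemChebSplit1}), and handle the degenerate cases $c=\pm1$ via the derivative identity. Your factorization $y^{2n}-2cy^n+1=(y^n-\alpha)(y^n-\alpha^{-1})$ is a convenient organizing device that the paper does not write down, but the underlying count is the same.

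For the final assertion the arguments diverge. The paper bounds the fibers of $x_0\mapsto \tfrac{1}{2}(x_0^n+x_0^{-n})$ by $2n$ and uses pigeonhole for $p>2n+1$, then checks $p=2n\pm1$ separately by exhibiting both $c=1$ and $c=-1$. You instead count the single fiber over $c=1$ exactly as $\gcd(n,p-1)$, observe this equals $n$ or divides $2$ according to the sign of $p\bmod n$, and conclude directly that $p>n+1$ leaves room for another $x_0$. Your argument is shorter and avoids the case split; the paper's approach yields the slightly finer information (recorded in the remark following the lemma) about exactly how many distinct values $c$ occur.
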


\begin{proof}
Let $b$ denote a primitive $n$th root of unity mod $p$.
If $x_0^n + 1/x_0^n = 2c$, then let
\[
x_k = \pm2^{-1}\left(x_0 b^k + (x_0 b^k)^{-1}\right)
\]
for $0\leq k < n$.
If $x_i = x_j$ for some $i < j$, then one finds that $x_0$ is a $2n$th root
of $1$, so $c$ is either $1$ or $-1$.
The case $c = 1$ was treated by Lemma~\ref{lemChebSplit1}, and there we
found that all the roots except $1$ and $-1$ have multiplicity $2$.
For $c = -1$, the same argument produces that all the roots have
multiplicity $2$.
If $c$ is neither $1$ nor $-1$, then we have $n$ distinct
roots of $T_n(x) - c$ mod $p$, and this polynomial has degree $n$, so it
splits completely mod $p$.

Suppose in addition that $p>n+1$.
We need to ensure that $x_0^n+1/x_0^n$ achieves more than one value $2c$ as
$x_0$ ranges over the nonzero elements of the field in question.
This is clear for $p>2n+1$, since for a fixed $c$ there are at most $2n$
solutions mod $p$ to $x^n + 1/x^n = 2c$.
For $p=2n\pm1$, it is straightforward to verify that $c=1$ and $c=-1$ each
have $(p-1)/2$ distinct solutions mod $p$.
Thus $p>n+1$ suffices.
\end{proof}

We remark that the condition $p>n+1$ is also necessary in the last
statement: when $p=n+1$, the only possibility is $c=1$.

We remark also that we can obtain a little more in this statement: write $p
= rn \pm 1$ as above.
If $r$ is even, then $c = -1$ is allowed, and this along with $c = 1$
each consume $n$ of the nonzero field elements, and the remaining elements
partition into blocks of size $2n$, so that makes $r/2 + 1$ different
allowable values of $c$.
If $r$ is odd, then the case $c = 1$ consumes $n$ elements, and the rest
partition into sets of size $2n$, so there are $(r+1)/2$ distinct allowable
values of $c$.

We now record some explicit constructions for symmetric ideal solutions in
the \PTE\ problem in certain local cases.
These results are contained in two theorems, one for the case of odd $n$,
and another for the case of even $n$.

\begin{theorem}\label{thmOddLocal}
Suppose $n\geq3$ is odd and $p$ is a prime.
\begin{enumerate}[(i)]
\item\label{enumOddA}
If $p\equiv1 \bmod n$ and $a\in\mathbb{F}_p$ has order $n \bmod p$, then
$A=\{1, a, a^2, \ldots, a^{n-1}\}$, $B=-A$ comprises a symmetric ideal
\PTE\ solution of size $n$ in $\mathbb{F}_p$.
\item\label{enumOddB}
If $p = \pm1 \bmod n$, then setting $A$ to the multiset of roots of
$T_n(x)-1$ over $\mathbb{F}_p$ and $B=-A$ comprises a symmetric ideal \PTE\
solution of size $n$ in $\mathbb{F}_p$.
\end{enumerate}
\end{theorem}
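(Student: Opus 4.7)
The plan is to verify, in each case, two properties of the pair $(A,B)$ with $B=-A$: that the two multisets are disjoint, and that
\[
P(x) := \prod_{a\in A}(x-a) - \prod_{a\in A}(x+a)
\]
is a nonzero constant in $\mathbb{F}_p[x]$. Symmetry is immediate from $B=-A$, and the second condition is equivalent to the ideal property in the sense of \eqref{eqnPTE2}: matching of moments of orders $1,\ldots,n-1$ comes for free (odd moments vanish because $B=-A$ forces $\sum a_i^k = -\sum a_i^k$, and even moments match trivially). In both parts, the hypotheses $n\geq 3$ odd and $p\equiv\pm1\bmod n$ force $p$ to be odd, so the denominators appearing below are invertible.

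For part \eqref{enumOddA}, since $a$ has order $n$, the set $A=\{1,a,\ldots,a^{n-1}\}$ consists of the $n$ distinct $n$th roots of unity in $\mathbb{F}_p^*$, and thus $\prod_{i=0}^{n-1}(x-a^i) = x^n - 1$. Substituting $x\mapsto -x$ and using that $n$ is odd gives $\prod_{i=0}^{n-1}(x+a^i) = x^n+1$, whence $P(x) = -2\neq 0$ in $\mathbb{F}_p$. For disjointness, $a^i = -a^j$ would force $a^{i-j}$ to have order dividing $\gcd(2,n)=1$, so $a^{i-j}=1$, contradicting $a^{i-j}=-1$.

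For part \eqref{enumOddB}, Lemma~\ref{lemChebSplit1} gives that $T_n(x)-1$ splits completely over $\mathbb{F}_p$; take $A$ to be its multiset of roots. Since $T_n$ has leading coefficient $2^{n-1}$, we have $\prod_{a\in A}(x-a) = (T_n(x)-1)/2^{n-1}$. The parity relation $T_n(-x) = (-1)^n T_n(x) = -T_n(x)$ for $n$ odd implies that the root multiset of $T_n(x)+1$ is $-A$, so $\prod_{a\in A}(x+a) = (T_n(x)+1)/2^{n-1}$, giving $P(x) = -2/2^{n-1}$, again a nonzero constant in $\mathbb{F}_p$. Disjointness of $A$ and $-A$ follows because every $a\in A$ satisfies $T_n(a)=1$, whereas $T_n(-a)=-1\neq 1$.

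The only delicate point I would flag as the main obstacle is tracking the factor $2^{n-1}$ coming from the leading coefficient of $T_n$ and confirming it never vanishes modulo $p$; apart from this, the verification reduces to the parity identity for $T_n$ and the splitting result Lemma~\ref{lemChebSplit1}.
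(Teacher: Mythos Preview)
Your proof is correct and takes a route that differs from the paper's in a small but clean way. The paper verifies the ideal property by checking moments directly: for part~(i) it uses the geometric-series identity $(a^m-1)\sum_{k=0}^{n-1}a^{mk}=a^{mn}-1\equiv 0$ to see that all moments of order $m<n$ vanish, and for part~(ii) it argues that since $T_n$ is an odd polynomial, $T_n(x)-1$ has no terms of even degree except the constant, so (implicitly via Newton's identities) the odd power sums of its roots vanish. You instead work with the polynomial characterization~\eqref{eqnPTE2} and compute the difference $\prod(x-a)-\prod(x+a)$ explicitly: for (i) this is $(x^n-1)-(x^n+1)=-2$, and for (ii) you identify $\prod_{a\in A}(x\mp a)=(T_n(x)\mp1)/2^{n-1}$ using the oddness of $T_n$, giving the constant $-2^{2-n}$. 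Your route is arguably tidier for part~(ii), since it bypasses the passage from vanishing elementary symmetric functions to vanishing power sums; the paper's moment computation stays closer to the formulation~\eqref{eqnPTE1} in which the \PTE\ problem is usually stated.

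One tiny quibble: your claim that the hypotheses force $p$ to be odd is not literally true in part~(ii), since $n=3$, $p=2$ satisfies $p\equiv-1\bmod n$. But there $T_3(x)-1$ collapses to a linear polynomial over $\mathbb{F}_2$ and $-A=A$, so the statement is degenerate; the paper's own Lemma~\ref{lemChebSplit1} already presumes $p$ odd in its proof.
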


\begin{proof}
For (\ref{enumOddA}), we note that if $n\nmid m$ then $a^m\not\equiv 1\bmod p$ and
\[
(a^m-1)\sum_{k=0}^{n-1} a^{mk} = a^{mn}-1 \equiv 0 \pmod p,
\]
so all of the required even moments of $A$ and $B$ are $0$.
For (\ref{enumOddB}), from Lemma~\ref{lemChebSplit1} we know that $A$ has $n$
elements, with each element except $1$ appearing twice.
Write $A = \{1\} \cup A' \cup A'$ (as a multiset), where $A' = \{(b^k +
b^{-k})/2 : 1\leq k \leq (n-1)/2\}$, with $b$ having order $n$ (mod $p$) as in
Lemma~\ref{lemChebSplit1}.
If $a \in A$ then $T_n(a) = 1$ so $T_n(-a) = -1$ since $T_n(x)$ is an odd
function when $n$ is odd, so $-a \not\in A$.
Also, since $T_n(x)$ is odd of degree $n\geq3$ all the odd moments $m\leq n-2$
of $A$ and $B$ are $0$.
\end{proof}

For example, when $n=9$ and $p=19$, part (\ref{enumOddA}) with $a=4$
produces the solution
\begin{equation}\label{eqn9p19i}
A_1 = \{1,4,5,6,7,9,11,16,17\},
\end{equation}
and part (\ref{enumOddB}) yields
\begin{equation}\label{eqn9p19ii}
A_2 = \{1,9,9,11,11,13,13,14,14\}.
\end{equation}
Indeed, we verified these are the only two symmetric ideal solutions of
size $9$ mod $19$, so if $19$ does not divide the constant in a \PTE\
solution over $\mathbb{Z}$, then it must be congruent mod $19$ to a unit
multiple of one of these two solutions.
We remark that the two known symmetric ideal solutions \eqref{eqnPTE9} of size
$9$ over $\mathbb{Z}$ do not have $19$ as a factor of their respective
constants.
The smaller one is congruent to $12A_2$, the larger, to $9A_1$.
Similarly, these constructions produce the only two local solutions at
$n=11$ with $p=23$: $\{1,2,3,4,6,8,9,12,13,16,18\}$ and
$\{1,2,2,3,3,5,5,7,7,17,17\}$.

In general, these constructions do not describe all local solutions.
For example, size $n=9$ admits the local solution
\begin{equation}\label{eqn9p23}
\{1,1,1,3,8,11,11,16,17\}
\end{equation}
at $p=23$, and $n=11$ has
\begin{equation}\label{eqn11p29}
\{1,1,1,15,19,21,21,22,23,25,25\}
\end{equation}
at $p=29$.
These are in fact the unique solutions for these cases, up to unit
multiples.

We next provide similar constructions for the even case.

\begin{theorem}\label{thmEvenLocal}
Suppose $n\geq2$ is even and $p$ is a prime.
\begin{enumerate}[(i)]
\item\label{enumEvenA}
If $p\equiv \pm1\bmod n$ and $p>n+1$, and $c_1 \neq c_2$ have the property
that $T_n(x)-c_1$ and $T_n(x)-c_2$ both split over $\mathbb{F}_p$, then
setting $A$ and $B$ to the multiset of roots of these repective polynomials
over $\mathbb{F}_p$ comprises a symmetric ideal \PTE\ solution of size $n$ in
$\mathbb{F}_p$.
\item\label{enumEvenB}
If $p \equiv 1\bmod n$, let $a$ denote an element of order $n \bmod p$.
Then there exist $r:=(p-1)/n$ choices for an integer $s\in[0,p-1)$,
including $s=0$, such that $A = \{1, a, \ldots, a^{n-1}\}$ and $B = sA$
comprises a symmetric ideal \PTE\ solution of size $n$ in $\mathbb{F}_p$.
\end{enumerate}
\end{theorem}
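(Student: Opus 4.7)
The plan is to verify, in each part, the three defining conditions of a symmetric ideal \PTE\ solution --- matching moments up to order $n-1$, symmetry ($A = -A$ and $B = -B$), and disjointness ($A \cap B = \emptyset$) --- exploiting throughout that $T_n(x)$ has degree $n$, leading coefficient $2^{n-1}$, and is an even polynomial since $n$ is even.

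For part~(\ref{enumEvenA}), I would let $P_A(x)$ and $P_B(x)$ denote the monic polynomials in $\mathbb{F}_p[x]$ whose roots are the multisets $A$ and $B$. The splitting hypotheses yield $P_A(x) = (T_n(x) - c_1)/2^{n-1}$ and $P_B(x) = (T_n(x) - c_2)/2^{n-1}$, so
\[
P_A(x) - P_B(x) = \frac{c_2 - c_1}{2^{n-1}}
\]
is a nonzero constant (the assumption $p > n+1 \geq 3$ forces $p$ odd, so $2^{n-1}$ is invertible). The polynomial characterization in \eqref{eqnPTE2} then identifies $(A,B)$ as an ideal solution. Symmetry $A = -A$ and $B = -B$ is immediate because $T_n$ is even, so the root multiset of each $T_n(x) - c_j$ is stable under $x \mapsto -x$. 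Disjointness follows at once, since any common root $x_0$ would force $c_1 = T_n(x_0) = c_2$, contradicting $c_1 \neq c_2$.

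For part~(\ref{enumEvenB}), I would first note that $a^{n/2}$ has order $2$ in $\mathbb{F}_p^*$, hence $a^{n/2} = -1$, so $-A = a^{n/2} A = A$; symmetry of $B$ then follows from $-B = s(-A) = sA = B$ (trivially when $s = 0$). For each $k$ with $1 \leq k \leq n-1$, the identity
\[
(a^k - 1)\sum_{i=0}^{n-1} a^{ik} = a^{kn} - 1 = 0
\]
combined with $a^k \neq 1$ yields $\sum_{i=0}^{n-1} a^{ik} = 0$; the $k$th moment of $B = sA$ is $s^k$ times this quantity, hence also $0$, so the moments of $A$ and $B$ agree through order $n-1$. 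Disjointness $A \neq B$ reduces to $sA \neq A$, i.e., $s \notin \langle a \rangle$, or $s = 0$. Since $\langle a \rangle$ has index $r = (p-1)/n$ in $\mathbb{F}_p^*$, selecting one representative from each of the $r-1$ nontrivial cosets of $\langle a \rangle$ produces distinct multisets $sA$, and together with $s=0$ this furnishes exactly $r$ inequivalent solutions.

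Neither part contains a genuine obstacle; both reduce essentially to direct computation. The subtlest point is the counting in part~(\ref{enumEvenB}): in fact there are $p - n$ integer values of $s \in [0,p-1)$ for which $(A, sA)$ is a valid symmetric ideal solution, but values of $s$ in the same coset of $\langle a \rangle$ produce identical multisets $sA$, so the claim of $r$ choices must be read as $r$ inequivalent solutions, one per coset of $\langle a \rangle$ in $\mathbb{F}_p^*$, together with $s = 0$.
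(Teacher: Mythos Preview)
Your proof is correct and follows essentially the same route as the paper's: part~(\ref{enumEvenA}) via the polynomial characterization~\eqref{eqnPTE2} (the paper's one-line proof simply invokes Lemma~\ref{lemChebSplit2}), and part~(\ref{enumEvenB}) via the vanishing of power sums of $n$th roots of unity together with coset counting in $\mathbb{F}_p^*$. Your reading of the ``$r$ choices'' in part~(\ref{enumEvenB}) as $r$ inequivalent multisets $B=sA$---one for each nontrivial coset of $\langle a\rangle$, together with $s=0$---agrees with the paper's appeal to the index-$r$ subgroup of $r$th powers.
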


\begin{proof}
Part (\ref{enumEvenA}) follows directly from Lemma~\ref{lemChebSplit2}.
For (\ref{enumEvenB}), let $1 \leq m < n/2$.
Since $a^{2m} \not\equiv 1\bmod p$, we have
\[
(a^{2m}-1)\sum_{k=0}^{n/2-1} a^{2mk} = a^{mn} - 1 \equiv 0 \pmod p,
\]
so all the required even moments of $A$ and $B$ are $0$ mod $p$.
We need only ensure that $A$ does not intersect $\pm B$.
This is certainly the case at $s=0$.
The fact that there are $r$ different choices for $s$ follows from the fact
that the set of $r$th powers mod $p$ forms a subgroup of index $r$ in the
multiplicative group $\mathbb{F}_p^*$.
\end{proof}

For example, at $n=10$ construction (\ref{enumEvenA}) produces
$A=\pm\{0,3,3,4,4\}$, $B=\pm\{1,2,2,7,7\}$ using $c=-1$ and $c=1$
respectively when $p=19$, and $A=\pm\{1,3,3,12,12\}$,
$B=\pm\{2,4,7,10,14\}$ using $c=1$ and $c=14$ when $p=29$.
Also at $n=10$, construction (\ref{enumEvenB}) with $a=2$ produces
$A=\pm\{1,2,3,4,5\}$, $B=\{0,0,0,0,0,0,0,0,0,0\}$ with $p=11$ and $s=0$ (the
only local solution up to scalar multiplication for this case), and at $p=31$
it yields the three solutions $A=\pm\{1,2,4,8,16\}$ and $B=sA$ with $s=0$,
$3$, or $5$.

\section[Computations in $\mathbb{Z}$]{Computations in \protect{\boldmath{$\mathbb{Z}$}}}\label{secCompZ}

We describe searches for symmetric ideal solutions in the \PTE\ problem
over $\mathbb{Z}$ with size $n$ satisfying $9\leq n\leq16$ and height
bounded by a given integer $H=H(n)$.
Our method builds on the work of Borwein et al.~\cite{BLP}, but adds some significant new
features.
We describe our strategy for odd $n$ first, then for even $n$.

\subsection[Searches With $n$ Odd]{Searches With \protect{\boldmath{$n$}} Odd}\label{subsecOddSearches}

Given a positive integer $H$ and an odd positive integer $n$, we wish to
determine all symmetric ideal solutions $A=\{a_1, \ldots, a_n\}$, $B=-A$,
with $\abs{a_i}\leq H$ for each $i$.
In our method, the prime divisors of the \PTE\ constant $C_n'$ have a central
role.

If a prime $p$ divides the constant $C_n(A,B)$ for an ideal \PTE\ solution
$(A,B)$, then from \eqref{eqnCnZ} and the fact that $\mathbb{F}_p[x]$ is a
\UFD\ it follows that $A\equiv B$ mod $p$ as multisets.
Thus, if $A$ is a symmetric ideal solution of size $n$ with $n$ odd, then
necessarily $A=-A$ mod $p$ for each $p\mid C'_n$.
Further, since $n$ is odd, each such prime $p$ must divide an odd number of
elements in any solution $A$.
Borwein et al.\ \cite{BLP} used this observation to trim their search for
symmetric ideal \PTE\ solutions over the integers with odd size $n$.
Let $p_1>p_2$ denote the two largest primes required in $C_n'$ from
Table~\ref{tableReqDivZ}, so at $n=11$ we have $p_1=31$ and $p_2=19$.
We may assume that $p_1\mid a_1$.
If $p_2\nmid a_1$, we can select $a_2\in[-H,H]$ so that $p_2\mid
(a_1+a_2)$.
After this, we can enforce $p_1\mid(a_2+a_3)$ when selecting $a_3$, unless
$p_1$ already divides $a_1+a_2$.
In general, we require
\begin{equation}\label{eqnTwoPrimes}
\begin{split}
a_1 &\equiv 0 \pmod{p_1},\\
(a_1+a_2)a_1 &\equiv 0 \pmod{p_2},\\
(a_2+a_3)(a_1+a_2) &\equiv 0 \pmod{p_1},\\
(a_k+a_{k+1})\sum_{j=1}^k a_j &\equiv 0 \pmod{p_{\pi(k)}},\\
\end{split}
\end{equation}
where $\pi(k)=1$ if $k$ is even and $2$ if $k$ is odd.
In \cite{BLP}, this procedure was employed to select $\ceil{n/2}$ elements.
After this, polynomial interpolation was used to determine if it is
possible to extend this to a symmetric ideal solution of size $n$.
This stems from the observation that in a symmetric solution $(A,-A)$ of
size $n$, Equation~\eqref{eqnCnZ} implies that
\[
\prod_{i=1}^n(a_i+a_j) = C_n(A,-A)
\]
for any $j$, so
\[
C_n(A,-A)^{-1} \prod_{i=\ceil{n/2}+1}^{n} (a_i+a_j) = \prod_{i=1}^{\ceil{n/2}} (a_i+a_j)^{-1}
\]
for $1\leq j\leq\ceil{n/2}$.
If $a_1, \ldots, a_{\ceil{n/2}}$ are distinct, and none is the negation
of another, then there is a unique interpolating polynomial $g(x)$ having
degree $\floor{n/2}$ that satisfies
\begin{equation}\label{eqnPolyInterp1}
g(a_j) = \prod_{i=1}^{\ceil{n/2}} (a_i+a_j)^{-1}
\end{equation}
for $1\leq j\leq\ceil{n/2}$.
Thus, given a qualifying list $a_1, \ldots, a_{\ceil{n/2}}$, one may
construct this polynomial $g(x)$, and then test if all of its roots are
integers.

We employ the strategy of \eqref{eqnTwoPrimes} to select the first
$\ceil{n/2}$ values, but then depart from \cite{BLP} by turning to other
tests, rather than building an interpolating polynomial at this stage.
First, after selecting $a_1, \ldots, a_{\ceil{n/2}}$, we check if this
list can possibly be a subset of a symmetric ideal solution of size $n$
modulo two or three additional required prime divisors $q_i$ of $C_n'$, by
checking if it is possible to augment the current selections to construct an
$A$ with the required property $A\equiv A'\bmod q_i$.
For example, at $n=11$ we might use $q_1=17$, $q_2=13$, and $q_3=11$.
Any set that fails one of these checks is discarded at this point.
After this, we select an additional integer $a_{\ceil{n/2}+1}$ using
\eqref{eqnTwoPrimes}, and check the local conditions modulo the $q_i$
again.
For sets passing these conditions, we employ a particular polynomial
$F_n(x_1,\ldots,x_{\floor{n/2}})$, depending only on $n$, which allows us
to test very quickly if a set of $\ceil{n/2}+1$ selections for the $a_i$
can be extended to form a symmetric ideal solution of size~$n$.

We describe the construction of the polynomial $F_n$.
For $1\leq k\leq\floor{n/2}$, define the \textit{weight} of the variable
$x_k$ by $W(x_k)=2k-1$, and extend this function to monomials by letting
the weight act additively: $W(x_i x_j)=W(x_i)+W(x_j)$.
Let $D$ be a positive integer whose value will be selected later, and let
$\mathcal{M}_{D,n}$ denote the monomials in $\floor{n/2}$ variables having
weight $D$:
\[
\mathcal{M}_{D,n} = \left\{\prod_{k=1}^{\floor{n/2}} x_k^{d_k} :
d_k\geq0 \mathrm{\ and\ } \sum_{k=1}^{\floor{n/2}} d_k W(x_k) = D\right\}.
\]
For example, when $n=9$ and $D=10$ this set has nine elements:
\[
\mathcal{M}_{10,9} = \{x_1^{10},\, x_1^7 x_2,\, x_1^4 x_2^2,\, x_1 x_2^3,\,
x_1^5 x_3,\, x_1^2 x_2 x_3,\, x_3^2,\, x_1^3 x_4,\, x_2 x_4\}.
\]
Note that the parity of the total degree of each monomial matches the
value of $D$ mod $2$.

Consider the set of polynomials obtained from
$\mathcal{M}_{D,n}$ by replacing each variable $x_k$ with the
polynomial $\sum_{j=(n+5)/2}^n a_j^{2k-1}$, that is, the $(2k-1)$th
moment of the $(n-3)/2$ variables corresponding the elements of our
set that have not yet been selected.  If $D$ is sufficiently large,
then we may use linear algebra to determine a nontrivial linear
combination of these polynomials which is identically $0$.  Applying
this linear combination to the elements of $\mathcal{M}_{D,n}$
produces our polynomial $F_n$.

If a list $a_1, \ldots, a_{(n+3)/2}$ can be extended to a symmetric ideal
solution $A$, then each of the odd moments $m=1$, $3, \ldots, n-2$ of $A$
must be $0$, so
\begin{equation}\label{eqnPowerSplit}
-\sum_{j=1}^{\frac{n+3}{2}} a_j^{2k-1} = \sum_{j=\frac{n+5}{2}}^n a_j^{2k-1}
\end{equation}
for $1\leq k\leq (n-1)/2$.  Consequently (and using the fact that the
degrees of the monomials in $\mathcal{M}_{D,n}$ all have the same
parity), the polynomial $F_n$ evaluated at these odd moments of the
selected integers $a_1, \ldots, a_{(n+3)/2}$ must be $0$.  This is a
necessary condition for a solution to exist, though it is not
sufficient.

When $n=9$, selecting $D=10$ suffices, and we compute
\begin{equation*}
\begin{split}
F_9(x_1,x_2,x_3,x_4) &= 
x_1^{10} - 15 x_1^7 x_2 - 175 x_1 x_2^3 + 63 x_1^5 x_3 + 315 x_1^2 x_2 x_3
- 189 x_3^2\\
&\qquad{} - 225 x_1^3 x_4 + 225 x_2 x_4.
\end{split}
\end{equation*}
When $n=11$, we require $D=15$, and find
\begin{equation*}
{\small
\begin{split}
&F_{11}(x_1,x_2,x_3,x_4,x_5) = 
x_1^{15}
- 35 x_1^{12} x_2
+ 175 x_1^9 x_2^2
- 1225 x_1^6 x_2^3
- 12250 x_1^3 x_2^4
+ 6125 x_2^5\\
&+ 252 x_1^{10} x_3
+ 945 x_1^7 x_2 x_3
+ 33075 x_1^4 x_2^2 x_3
- 11025 x_1 x_2^3 x_3
- 11907 x_1^5 x_3^2
- 59535 x_1^2 x_2 x_3^2\\
&- 35721 x_3^3
- 2025 x_1^8 x_4 
- 14175 x_1^5 x_2 x_4
+ 70875 x_1^2 x_2^2 x_4
+ 42525 x_1^3 x_3 x_4
+ 85050 x_2 x_3 x_4\\
&- 91125 x_1 x_4^2
+ 11025 x_1^6 x_5
- 55125 x_1^3 x_2 x_5
- 55125 x_2^2 x_5
+ 99225 x_1 x_3 x_5.
\end{split}}\normalsize
\end{equation*}
At $n=13$, we need $D=21$, and compute
\begin{equation*}
{\tiny
\begin{split}
&F_{13}(x_1,x_2,x_3,x_4,x_5,x_6) = 
x_1^{21}
- 70 x_1^{18} x_2
+ 1155 x_1^{15} x_2^2
- 9800 x_1^{12} x_2^3
- 67375 x_1^9 x_2^4
- 1414875 x_1^6 x_2^5
\\ &\quad
+ 4716250 x_1^3 x_2^6
+ 2358125 x_2^7
+ 756 x_1^{16} x_3
- 6615 x_1^{13} x_2 x_3
+ 291060 x_1^{10} x_2^2 x_3
+ 4729725 x_1^7 x_2^3 x_3
\\ &\quad
- 12733875 x_1^4 x_2^4 x_3
- 12733875 x_1 x_2^5 x_3
- 83349 x_1^{11} x_3^2
- 3929310 x_1^8 x_2 x_3^2
- 13752585 x_1^5 x_2^2 x_3^2
+ 45841950 x_1^2 x_2^3 x_3^2
\\ &\quad
+ 8251551 x_1^6 x_3^3
- 41257755 x_1^3 x_2 x_3^3
- 41257755 x_2^2 x_3^3
- 74263959 x_1 x_3^4
- 10125 x_1^{14} x_4
- 56700 x_1^{11} x_2 x_4
\\ &\quad
- 2338875 x_1^8 x_2^2 x_4
+ 27286875 x_1^5 x_2^3 x_4
- 27286875 x_1^2 x_2^4 x_4
+ 2338875 x_1^9 x_3 x_4
+ 9823275 x_1^6 x_2 x_3 x_4
\\ &\quad
+ 49116375 x_1^3 x_2^2 x_3 x_4
+ 81860625 x_2^3 x_3 x_4
+ 265228425 x_1 x_2 x_3^2 x_4
- 6014250 x_1^7 x_4^2
- 105249375 x_1^4 x_2 x_4^2
\\ &\quad
- 105249375 x_1 x_2^2 x_4^2
- 189448875 x_1^2 x_3 x_4^2
+ 135320625 x_4^3
+ 121275 x_1^{12} x_5
+ 606375 x_1^9 x_2 x_5
- 25467750 x_1^6 x_2^2 x_5
\\ &\quad
- 21223125 x_1^3 x_2^3 x_5
- 42446250 x_2^4 x_5
- 3274425 x_1^7 x_3 x_5
+ 114604875 x_1^4 x_2 x_3 x_5
- 229209750 x_1 x_2^2 x_3 x_5
\\ &\quad
+ 206288775 x_1^2 x_3^2 x_5
+ 49116375 x_1^5 x_4 x_5
+ 245581875 x_1^2 x_2 x_4 x_5
- 294698250 x_3 x_4 x_5
- 191008125 x_1^3 x_5^2
\\ &\quad
+ 191008125 x_2 x_5^2
- 893025 x_1^{10} x_6
+ 13395375 x_1^7 x_2 x_6
+ 156279375 x_1 x_2^3 x_6
- 56260575 x_1^5 x_3 x_6
\\ &\quad
- 281302875 x_1^2 x_2 x_3 x_6
+ 168781725 x_3^2 x_6
+ 200930625 x_1^3 x_4 x_6
- 200930625 x_2 x_4 x_6.
\end{split}}\normalsize
\end{equation*}
At $n=15$, we require $D=28$.
The polynomial $F_{15}$ has $159$ terms and height (i.e., maximal
coefficient in absolute value) $18101840006250$.

The polynomial $F_n$ has a central role in the remaining
steps of the method.  It is first employed in an additional local
check modulo another prime $r$, which is different from any of the
$p_i$ or $q_i$ ($r$ need not be a divisor of $C_n'$).  At the start of
the program, we precompute the odd moments $m_1, \ldots, m_{n-2}$ of
all $((n-3)/2)$-tuples over $\mathbb{Z}/r\mathbb{Z}$ and record the
negation of their values in $(\mathbb{Z}/r\mathbb{Z})^{(n-1)/2}$ that
are achieved in a table $T_{n,r}$.  We use this table together with
the polynomial $F_n$ to restrict the choices of $a_{\ceil{n/2}+1}$ mod
$r$.

We illustrate this using the case $n=11$, where we use $r=23$.
After $a_1, \ldots, a_6$ have been selected, and have passed the tests
mod $q_1$, $q_2$, and $q_3$, we compute all possible values of $a_7$ mod
$r$ so that $F_{11}(m_1,m_3,m_5,m_7,m_9)\equiv0 \bmod r$, where $m_i$
denotes the $i$th moment of $\{a_1,\ldots,a_7\}$.
Then we can easily determine if a particular choice of $a_7$ mod $r$ is
plausible, given $a_1, \ldots, a_6$, by checking if the five moments of
these seven values match an entry in the table $T_{n,r}$.
This accelerates the inner loop significantly, since we need only consider
approximately $2H/p_1 r$ values for each valid congruence class for $a_7$.

After this, for each integer $a_7\in[-H,H]$ congruent to one of these
permissible values mod $r=23$, and in an allowed congruence class mod
$p_1=31$ according to the strategy \eqref{eqnTwoPrimes}, and for which
$\{a_1,\ldots,a_7\}$ remains a viable subset modulo $q_1$, $q_2$, and
$q_3$, we test if $F_{11}(m_1,m_3,m_5,m_7,m_9)=0$.
However, rather than compute this value in $\mathbb{Z}$, it is more
efficient to compute it modulo $M_i$ for a few integers $M_i$.
We use $M_1=2^{50}$ first, as using a power of $2$ is helpful for speed.
If the value is $0$ here, then we use a second modulus, the prime
$M_2=2^{30}+3$.
The few sets that survive both of these tests we then check over
$\mathbb{Z}$ using Magma \cite{Magma} (version 2.25-3).
If this is verified as well then we use Magma again to compute a Gr\"obner
basis to determine if the polynomial system \eqref{eqnPowerSplit} has a
solution for $a_8$, $a_9$, $a_{10}$, $a_{11}$, given $a_1, \ldots, a_7$.

\subsubsection*{Results}
We implemented this method in C and used it to search for
symmetric ideal solutions in the \PTE\ problem at odd sizes $n=9$,
$11$, $13$, and $15$.
For each $n$, Table~\ref{tableZComp} displays the height bound $H$ employed in
our calculations, the bound $H_0$ reached in \cite{BLP} (if applicable), and
the values of the parameters $p_i$, $q_i$, and $r$ chosen in our calculations.
This table also summarizes our results.
For each $n$, it displays the number of primitive, symmetric, ideal solutions
at size $n$ with height at most $H$.
(A solution is \textit{primitive} if the members of $A$ and $B$ have no
nontrivial common factor.)
Some additional details on the results at each size are given below.

\subsubsection*{$n=9$:}
Using $(p_1,p_2)=(13,11)$ as in Table~\ref{tableReqDivZ}, we
found there are no primitive integral symmetric solutions up to height
$4000$ outside of those discovered by Letac \eqref{eqnPTE9}.
This search also reported several thousand examples which had at least five
integers and with algebraic irrationals as the remaining values, including
more than $100$ consisting of seven integers and two quadratic irrationals.
(There are likely many more such examples with height at most $4000$; the
ones we found are merely those that survived all our necessary conditions
for integer solutions.)
Some examples with small height are
\begin{equation*}
\begin{split}
A_1 &= \{-15, -13, 4-\sqrt{14}, -7, 2, 3, 10, 12, 4+3\sqrt{14}\},\\
A_2 &= \{-18, 2-\sqrt{259}, -10, -7, 1, 5, 12, 13, 2+\sqrt{259}\},\\
A_3 &= \{-30, (5-\sqrt{345})/2, -16, -14, 2, 9, 21, 23, (5+3\sqrt{345})/2\},\\
A_4 &= \{-33, -29, -25, -6, 18-\sqrt{231}, 3, 26, 28, 18+\sqrt{231}\}.
\end{split}
\end{equation*}
We then extended our search by employing our work on local solutions.
Since \eqref{eqn9p23} is the only symmetric ideal \PTE\ solution up to unit
multiples at $n=9$ in $\mathbb{Z}/23\mathbb{Z}$, we can first search for
integral solutions which reduce mod $23$ to a unit multiple of this
multiset, and then perform our search using $23$ as a required divisor in
$C_9'$.
Likewise, \eqref{eqn9p19i} and \eqref{eqn9p19ii} are the only obstructions
at the prime $19$ up to unit multiples, and we can check these separately.
Performing these tests allowed us to use $(p_1,p_2)=(23,19)$ in our method,
which enabled us to check to height $7000$.
We found that no new primitive symmetric ideal solutions for $n=9$ exist
with this height bound.

\subsubsection*{$n=11$:}
We first look for symmetric ideal solutions that reduce mod $29$
to a unit multiple of the lone local solution \eqref{eqn11p29} at this
prime, and then we use $(p_1,p_2)=(31,29)$ to search for integer solutions
at this size with height at most $3500$.
We find there are no symmetric ideal solutions for $n=11$ with this bound.
However, we found several hundred solutions where
$F_{11}(m_1,m_3,m_5,m_7,m_9)=0$ and the resulting polynomial system
contained irrational solutions.
In two of these, in fact, the solution consisted of nine integers and two
real quadratic irrationals:
\begin{equation}\label{eqnNear11}
\begin{split}
A_1 &= \{-95, -68-\alpha, -52, -48, -13, -9, 30, 34, 61, 65, 95+\alpha\},\\
A_2 &= \{-589, -546+\beta, -363, -185, -170, 41, 87-\beta, 234, 355, 548, 588\}
\end{split}
\end{equation}
(and $B_i=-A_i$ in each case), where $\alpha=0.0490649\ldots$ is the
positive real root of $x^2+163x-8$, and $\beta=0.0473969\ldots$ is the
smaller real root of $x^2-633x+30$.
(Both of these examples arose in preliminary searches to a smaller height
using $p_2=19$.)
We thus find symmetric ideal solutions to the \PTE\ problem at $n=11$ in
some real quadratic extensions of $\mathbb{Q}$.

\subsubsection*{$n=13$:}
Using $p_1=41$ and $p_2=37$ from Table~\ref{tableReqDivZ} we
find that no solutions in integers exist with height at most $2000$.
Furthermore, just two sets $\{a_1,\ldots,a_8\}$ were found that admit
some algebraic irrational members, totally real algebraic integers of
degree $5$ in both cases:
\begin{align*}
A_1 &= \{-1026, -532, -248, -245, 207, 286, 533, 1025\} \cup \{\alpha : f_1(\alpha)=0\},\\
A_2 &= \{-1827, -989, -715, -405, 64, 902, 1330, 1640\} \cup \{\alpha : f_2(\alpha)=0\},
\end{align*}
where
\begin{align*}\label{eqnNear9}
f_1(x) &= x^5 - 1215245 x^3 + 170781322924 x + 931022099280,\\
f_2(x) &= x^5 - 4278125 x^3 + 3664306448884 x - 975527373189600.
\end{align*}

\subsubsection*{$n=15$:}
We find no integral solutions exist for $n=15$ with height at most $1100$.
A few solutions with nine integers and the roots of a sextic arise,
including
\begin{equation*}
\{-340, -246, -235, -152, -141, 188, 199, 293, 387\} \cup \{\alpha : f(\alpha)=0\},
\end{equation*}
where
\begin{align*}
f(x) &= x^6 - 47x^5 - 221471x^4 + 18495675x^3 + 11686424346x^2\\
&\qquad - 1340204033304x + 2551344634800.
\end{align*}

\begin{table}[tp]
\caption{Summary of searches.
$H_0$ is the height bound employed in \cite{BLP}; $H$ is the height bound
reached in this work.
An asterisk on a prime $p_i$ indicates that a separate local search with
this prime was performed to justify its use.
The last line shows the number of distinct primitive, symmetric, ideal
\PTE\ solutions over $\mathbb{Z}$ with height at most $H$.}\label{tableZComp}
\begin{center}
\small
\begin{tabular}{|c|cccccccc|}\hline
$n$ & $9$ & $10$ & $11$ & $12$ & $13$ & $14$ & $15$ & $16$\\\hline
$H_0$ & $2000$ & $1500$ & $2000$ & $1000$ & --- & --- & --- & ---\\
$H$ & $7000$ & $2500$ & $3500$ & $1511$& $2000$ & $900$ & $1100$ & $850$\\
$p_1$ & $23^*$ & $29^*$ & $31$ & $31^*$ & $41$ & $37$ & $47$ & $53$\\
$p_2$ & $19^*$ & $23$ & $29^*$ & $29$ & $37$ & $31$ & $43$ & $43$\\
$q_1$ & $13$ & $17$ & $19$ & $19$ & $31$ & $23$ & $41$ & $41$\\
$q_2$ & $11$ & $13$ & $17$ & $17$ & $29$ & $19$ & $37$ & $37$\\
$q_3$ & --- & --- & $13$ & $11$ & $23$ & $17$ & $23$ & $29$\\
$r$ & $31$ & $31$ & $23$ & $23$ & $19$ & $29$ & $19$ & $23$\\\hline
Solutions & $2$ & $2$ & $0$ & $2$ & $0$ & $0$ & $0$ & $0$\\\hline
\end{tabular}
\end{center}
\end{table}

\subsection[Searches With $n$ Even]{Searches With \protect{\boldmath{$n$}} Even}\label{subsecEvenSearches}

Given a bound $H$ and an even integer $n$, we need to determine all
symmetric ideal solutions $A=\pm\{a_1,\ldots,a_{n/2}\}$,
$B=\pm\{b_1,\ldots,b_{n/2}\}$ with $1\leq a_i\leq H$ and $1\leq b_i\leq H$
for each $i$.
If $p\mid C_n'$ then we require $A\equiv B \bmod p$ as multisets.
Let $p_1>p_2$ be the two largest known prime divisors of $C_n'$.
We select $a_1$ freely in $[1,H]$, and then for $k\geq1$ we require
\begin{align}\label{eqnEvenabSelection}
b_k^2 &\equiv a_k^2 \pmod{p_1},\quad
(a_{k+1}^2 - b_k^2)\sum_{j=1}^k (a_j^2-b_j^2) \equiv 0\pmod{p_2}.
\end{align}
Thus, there are always one or two residue classes mod $p_1$ to check for
$b_k$.
For $a_{k+1}$, in most cases there are one or two classes mod $p_2$ to
check, but if $\sum_{j=1}^k (a_j^2-b_j^2)$ happens to be a multiple of
$p_2$, then $a_{k+1}$ is an unrestricted choice in $[1,H]$.

Borwein et al.\ \cite{BLP} used this strategy to select elements of $A$ and
$B$ until reaching $n/2+1$ total elements, and then they applied a
variation of their polynomial interpolation strategy to check if the selected
elements could be completed to a symmetric ideal solution of size $n$.
We summarize this method here; see \cite{BLP} for details.
If $k$ is any fixed integer in $[1,n/2]$, then
\[
C_n(A,B)^{-1} \prod_{i=n/2-k+2}^{n/2} (b_j^2-a_i^2) =
\prod_{i=1}^{n/2-k+1} (b_j^2-a_i^2)^{-1}
\]
for $1\leq j\leq k$.
Thus, if $a_1, \ldots, a_{n/2-k+1}$ and $b_1, \ldots, b_k$ are
selected (with no repeated values and no element equal to the negation of
another), then one can test the roots of the unique interpolating polynomial
$g_0(x)$ with degree $k-1$ satisfying
\begin{equation}\label{eqnPolyInterp0}
g_0(a_j^2) = \prod_{i=1}^{n/2-k+1} (b_j^2-a_i^2)^{-1}
\end{equation}
for $1\leq j\leq k$.
The authors of \cite{BLP} thus use $k=\floor{n/4}$.

In place of this interpolation strategy, we adapt our method from
Section~\ref{subsecOddSearches} to construct a polynomial $F_n$ in $n/2-1$
variables with the property that $F_n$ vanishes at a point constructed from
the selected values $a_i$ and $b_i$ if these selected values can be
extended to a full symmetric ideal solution at $n$.
Since we are concerned with the even moments, this time we define the
weight of the variable $x_k$ by $W'(x_k)=2k$ and extend it additively on
products as before.
For a positive integer $D$, we let $\mathcal{M}'_{D,n}$ denote the monomials
in $n/2-1$ variables having weight $D$:
\[
\mathcal{M}'_{D,n} = \left\{\prod_{k=1}^{n/2-1} x_k^{d_k} :
d_k\geq0 \mathrm{\ and\ } \sum_{k=1}^{n/2-1} d_k W'(x_k) = D\right\}.
\]

Having selected the first $s=\ceil{n/4}+1$ elements for $A$ and the first
$t=\floor{n/4}+1$ elements for $B$, for a total of $n/2+2$ elements, then
we need the remaining elements $a_{s+1}, \ldots, a_n$, $b_{t+1}, \ldots,
b_n$ to satisfy
\begin{equation}\label{eqnEvenMoments}
\sum_{i=s+1}^n a_i^{2k} - \sum_{i=t+1}^n b_i^{2k} =
\sum_{i=1}^t b_i^{2k} - \sum_{i=1}^s a_i^{2k}
\end{equation}
for $0<2k<n$.
Consider the set of polynomials obtained from $\mathcal{M}'_{D,n}$ by
replacing each variable $x_k$ by the expression on the left side of
\eqref{eqnEvenMoments}.
For sufficiently large $D$, we use linear algebra to determine a nontrivial
linear combination of these polynomials which is identically $0$.
Applying this combination to the elements of $\mathcal{M}'_{D,n}$ produces
our polynomial $F_n$.
This polynomial must evaluate to $0$ if we replace $x_k$ with the
known right side of \eqref{eqnEvenMoments} if the current selection of
elements of $A$ and $B$ can be completed to a full symmetric ideal
solution.

We record the polynomials constructed for some even integers $n$.
When $n=10$, choosing $D=12$ suffices, and we compute
\begin{equation*}
\begin{split}
F_{10}(x_1,x_2,x_3,x_4) &= x_1^6 - 3 x_1^4 x_2 + 9 x_1^2 x_2^2
+ 9 x_2^3 - 8 x_1^3 x_3 - 24 x_1 x_2 x_3 + 16 x_3^2\\
&\qquad{} + 18 x_1^2 x_4 - 18 x_2 x_4.
\end{split}
\end{equation*}
At $n=12$, we require $D=18$, and find
\begin{equation*}
\begin{split}
F_{12}(&x_1,x_2,x_3,x_4,x_5) = x_1^9 + 18 x_1^5 x_2^2 - 135 x_1 x_2^4
- 24 x_1^6 x_3 + 360 x_1^2 x_2^2 x_3 + 320 x_3^3\\
&{}- 360 x_1^3 x_2 x_4 - 720 x_2 x_3 x_4 + 540 x_1 x_4^2 + 144 x_1^4 x_5
+ 432 x_2^2 x_5 - 576 x_1 x_3 x_5.
\end{split}
\end{equation*}
For $n=14$, choosing $D=24$ suffices, and produces
\begin{equation*}
{\small
\begin{split}
&F_{14}(x_1,x_2,x_3,x_4,x_5,x_6) = 
- x_1^{12}
+ 6 x_1^{10} x_2
- 45 x_1^8 x_2^2
+ 60 x_1^6 x_2^3
+ 225 x_1^4 x_2^4
+ 1350 x_1^2 x_2^5
\\ &\quad
- 675 x_2^6
+ 40 x_1^9 x_3
- 720 x_1^5 x_2^2 x_3
- 4800 x_1^3 x_2^3 x_3
+ 1800 x_1 x_2^4 x_3
- 240 x_1^6 x_3^2
+ 3600 x_1^4 x_2 x_3^2
\\ &\quad
+ 3600 x_1^2 x_2^2 x_3^2
- 1200 x_2^3 x_3^2
- 3200 x_1^3 x_3^3
- 9600 x_1 x_2 x_3^3
- 6400 x_3^4
- 90 x_1^8 x_4
\\ &\quad
+ 1080 x_1^6 x_2 x_4
+ 2700 x_1^4 x_2^2 x_4
- 5400 x_1^2 x_2^3 x_4
+ 1350 x_2^4 x_4
- 1440 x_1^5 x_3 x_4
+ 21600 x_1 x_2^2 x_3 x_4
\\ &\quad
+ 7200 x_1^2 x_3^2 x_4
+ 21600 x_2 x_3^2 x_4
- 2700 x_1^4 x_4^2
- 16200 x_1^2 x_2 x_4^2
- 8100 x_2^2 x_4^2
- 21600 x_1 x_3 x_4^2
\\ &\quad
+ 16200 x_4^3
- 288 x_1^7 x_5
- 2592 x_1^5 x_2 x_5
+ 4320 x_1^3 x_2^2 x_5
- 12960 x_1 x_2^3 x_5
+ 5760 x_1^4 x_3 x_5
\\ &\quad
- 17280 x_2^2 x_3 x_5
+ 23040 x_1 x_3^2 x_5
+ 8640 x_1^3 x_4 x_5
+ 25920 x_1 x_2 x_4 x_5
- 34560 x_3 x_4 x_5
\\ &\quad
- 20736 x_1^2 x_5^2
+ 20736 x_2 x_5^2
+ 1200 x_1^6 x_6
- 3600 x_1^4 x_2 x_6
+ 10800 x_1^2 x_2^2 x_6
+ 10800 x_2^3 x_6
\\ &\quad
- 9600 x_1^3 x_3 x_6
- 28800 x_1 x_2 x_3 x_6
+ 19200 x_3^2 x_6
+ 21600 x_1^2 x_4 x_6
- 21600 x_2 x_4 x_6.
\end{split}}\normalsize
\end{equation*}
Finally, at $n=16$ we select $D=32$ and compute a polynomial
$F_{16}(x_1,\ldots,x_7)$ with $77$ terms and height $14515200$.
We remark that the set $\mathcal{M}'_{32,16}$ has $164$ elements, so our
relation relies on less than half of these monomials.

With the polynomials $F_n$ in hand, our strategy for even $n$ now mirrors
that for odd $n$.
We use the restrictions in \eqref{eqnEvenabSelection} to select $a_1,
\ldots, a_s$ and $b_1, \ldots, b_t$ for a total of $n/2+2$ selections.
After the penultimate selection we perform local checks modulo two or three
primes $q_i$, which are also required divisors of $C_n'$, and we ensure our
last selection survives both the check mod $r$ as before, with $r$ again an
unused prime, as well as the local conditions modulo the $q_i$ again.
We then test if $F_n(m'_2,\ldots,m'_{n-2})=0$ mod $M_1=2^{50}$ as well as
$M_2=2^{30}+3$, for the appropriate signed moment sums $m_2', \ldots,
m_{n-2}'$ given by the right side of \eqref{eqnEvenMoments}.
Surviving sets are then tested over $\mathbb{Z}$ using Magma, and then a
Gr\"obner basis is calculated to determine if the polynomial system
\eqref{eqnEvenMoments} has a solution in integers (or even in algebraic
irrationals of degree at most $2\floor{(n-2)/4}$).

\subsubsection*{Results}

Our results for even $n$ are also summarized in Table~\ref{tableZComp}.
We add some additional details here.

\subsubsection*{$n=10$:}
We first checked that no solutions over
$\mathbb{Z}$ exist with height at most $2500$ that reduce mod $29$ to
one of the symmetric ideal solutions for this prime.  There are only
two such solutions, up to unit multiples: the Chebyshev example from
Theorem~\ref{thmEvenLocal}(\ref{enumEvenA}) with $(c_1,c_2)=(1,14)$,
so $\pm\{1,3,3,12,12\}$, $\pm\{2,4,7,10,14\}$, and the sporadic
solution $\pm\{0,1,1,5,5\}$, $\pm\{2,6,9,10,11\}$.  We then used our
method with $p_1=29$ and $p_2=23$ to verify that no additional
primitive symmetric ideal solutions exist with height at most $2500$,
beyond the two solutions \eqref{eqnPTE10} found in \cite{BLP}.

Our search recovers several thousand pairs $(A,B)$ consisting of at least
$14$ integers, plus a few quadratic or quartic irrationals.
A small number of these involve only quadratic irrationals, including some
where all the non-integral values have the form $\sqrt{k}$ for some $k$,
and others that lie in a single quadratic extension of the rationals.
Many more such examples surely exist; the ones we found were simply those
that survived all of our local requirements for integral solutions.
We list two examples of each type below:
\begin{gather*}
\pm\{13, 77, 83, 107, i\sqrt{5099}\},\;
\pm\{47, 49, 97, 103, i\sqrt{5291}\};\\
\pm\{823, 905, \sqrt{2647921}, 2079, 2417\},\;
\pm\{\sqrt{328849}, 1265, 1431, 2137, 2401\};\\
\pm\{20, 37, 72, 76, 91\},\;
\pm\{9, 47, 62, \abs{\beta_1}, \abs{\beta_2}\};\\
\pm\{389, 889, 1742, 2112, 2334\},\;
\pm\{213, 1001, \abs{\beta_3}, \abs{\beta_4}, 2308\}.
\end{gather*}
Here $\beta_1$ and $\beta_2$ are the roots of $x^2+6x-7420$, and $\beta_3$
and $\beta_4$ are the roots of $x^2+500x-3630036$.

\subsubsection*{$n=12$:}
We search to height $1511$ by using $(p_1,p_2)=(31,29)$.
Two local solutions mod $31$ obstruct our use of that prime:
$\pm\{0,1,1,1,8,13\}$, $\pm\{3,7,7,12,15,15\}$, and $\pm\{0,0,0,1,5,6\}$,
$\pm\{3,3,13,13,15,15\}$.
We first verified that no integer solutions with height at most $1511$
exist which reduce mod $31$ to a unit multiple of one of these two
multisets.
After this, we completed a search over $\mathbb{Z}$ to this height using
$(p_1,p_2)=(31,29)$.
We find that \eqref{eqnPTE12a} and \eqref{eqnPTE12b} are the only two
primitive symmetric ideal solutions to the \PTE\ problem at this size with
height at most $1511$.

We remark that the method of Broadhurst \cite{Broadhurst} that produced
\eqref{eqnPTE12c} determined the symmetric ideal solutions with size $n=12$
and height at most $41\cdot53=2173$, for which the associated constant
$C_{12}(A,B)$ is relatively prime to $2173$.
In \eqref{eqnPTE12b}, both $41$ and $53$ divide this constant.

Our searches also find a few examples where $A$ has ten integers plus
the roots of a quadratic and $B$ has eight integers and the roots of a
quartic, such as
\begin{equation*}
\begin{split}
A &= \pm\{73, 279, \sqrt{137985}, 547, 661, 715\},\\
B &= \pm\{155, 197, 671, 711\} \cup \{\alpha : f(\alpha) = 0\},
\end{split}
\end{equation*}
where $f(x) = x^4 - 449914 x^2 + 47934959865$.
A few thousand examples also arise where both $A$ and $B$ have eight
integers, along with four quadratic or quartic irrationals.

\subsubsection*{$n=14$:}
We find that no solutions exist with height at most $900$ using
$(p_1,p_2)=(37,31)$ from Table~\ref{tableReqDivZ}.
Only two solutions that involve irrational values arise: in both of them
one set has ten integers and four quartic irrationals, and the other has
eight integers and the roots of a sextic polynomial.
One of these is
\begin{equation*}
\begin{split}
A &= \pm\{126, 197, 260, 331, 768\} \cup \{\alpha : f_A(\alpha) = 0\},\\
B &= \pm\{12, 83, 334, 405\} \cup \{\alpha : f_B(\alpha) = 0\},
\end{split}
\end{equation*}
where $f_A(x) = x^4 - 160333x^2 - 648436092$ and $f_B(x) = x^6 - 699389x^4
+ 67656970756 x^2 - 1788905080155648.$

\subsubsection*{$n=16$:}
We used $(p_1,p_2)=(53,43)$ to show that no integer solutions of
this size exist with height at most $850$.
In addition, none of the candidates that survive the local tests satisfies
the polynomial $F_{16}$, so we find no algebraic solutions having at least
ten integer values.

\section{Divisibility Requirements in Some Quadratic Number Fields}\label{secDivisQ}

Caley~\cite{Caley12,Caley13} proved that many of the statements regarding
required divisors in the \PTE\ problem generalize in a natural way to the
setting of the ring of integers in a number field.
Of course, one must work in a unique factorization domain in order for
these quantities to be well defined.
While Caley concentrated on \PTE\ over the Gaussian integers, he recorded a
number of statements that are valid in any \UFD\@.
We summarize these here, and also note some additional requirements that
arise in the symmetric case.

Following our prior development over $\mathbb{Z}$, if $(A,B)$ forms an
ideal solution to the \PTE\ problem of size $n$ over a \UFD\ $\mathcal{O}$,
we let $C_n(\mathcal{O},A,B)$ denote the constant as in \eqref{eqnCnZ}.
Then we define $C_n(\mathcal{O})$ as the greatest common divisor in
$\mathcal{O}$ over all such values $C_n(\mathcal{O},A,B)$.
Caley established the following generalization of Proposition~\ref{propRS}.

\begin{prop}[Caley \cite{Caley13}]\label{propCaley1}
Suppose $\mathcal{O}$ is a \UFD, and let $C_n(\mathcal{O})$ denote the
constant in an ideal \PTE\ solution of size $n$ over the ring
$\mathcal{O}$.
Let $q\in\mathcal{O}$ denote a prime.
\begin{enumerate}[(i)]
\item If $q\mid C_n(\mathcal{O})$ then $q^{\ceil{\frac{n}{N(q)}}}\mid
C_n(\mathcal{O})$.\label{pc1p1}
\item If $N(q) > 3$ then $N(q)\mid C_{N(q)}(\mathcal{O})$.\label{pc1p2}
\item If $n+2\leq N(q) < n+2+\frac{n-3}{6}$ then $q\mid
C_n(\mathcal{O})$.\label{pc1p3}
\item If $N(q)=2$ then $q^4\mid C_5(\mathcal{O})$.\label{pc1p4}
\end{enumerate}
\end{prop}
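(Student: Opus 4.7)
The plan is to generalize the arguments of Rees and Smyth (together with their extensions by Caley and by Filaseta--Markovich) from $\mathbb{Z}$ to a general UFD $\mathcal{O}$, exploiting the fact that for any prime $q \in \mathcal{O}$ the quotient $\mathcal{O}/q\mathcal{O}$ is a finite field of order $N(q)$. Both \eqref{eqnCnZ} and \eqref{eqnCn} carry over verbatim with products taken in $\mathcal{O}$, so the divisibility structure needed to mimic the integer proofs is already in place.

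For part (\ref{pc1p1}) I would use pigeonhole. Reducing \eqref{eqnCnZ} modulo $q$, the hypothesis $q\mid C_n(\mathcal{O},A,B)$ forces $\prod_i(x-\bar a_i) = \prod_i(x-\bar b_i)$ in $\mathbb{F}_{N(q)}[x]$, so the multisets $\bar A$ and $\bar B$ coincide in $\mathbb{F}_{N(q)}$. Since $|\bar A|=n$, some residue class $\bar c$ is represented in $\bar A$ with multiplicity at least $\ceil{n/N(q)}$. Picking any $j$ with $\bar b_j = \bar c$ and applying $C_n(\mathcal{O},A,B)=\prod_i(b_j-a_i)$, at least $\ceil{n/N(q)}$ factors on the right are divisible by $q$, giving the required power of $q$ in every ideal solution and hence in $C_n(\mathcal{O})$.

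Part (\ref{pc1p3}) I would handle by first proving an analogue of the Multiplicity Lemma (Lemma~\ref{lemRSML}) with $\mathbb{F}_p$ replaced by $\mathbb{F}_{N(q)}$: if $q\nmid C_n(\mathcal{O},A,B)$ and $N(q)>n$, then the multiplicity difference $\nu_\alpha(\bar f)-\nu_\alpha(\bar g)$ is interpolated by a polynomial $h\in\mathbb{F}_{N(q)}[x]$ of degree exactly $N(q)-n-1$ satisfying $\sum_\alpha |\langle h(\alpha)\rangle_{N(q)}|\leq 2n$. The Rees--Smyth proof of the original lemma uses nothing specific to prime residue fields, so it should transcribe directly. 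Then for $N(q)$ in the stated narrow range, the same degree-versus-weight count as in \cite{ReesSmyth} shows the sum bound must be violated, providing the required contradiction and forcing $q\mid C_n(\mathcal{O})$.

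Parts (\ref{pc1p2}) and (\ref{pc1p4}) are the most delicate. For (\ref{pc1p2}), the approach is to adapt the integer proof that $p\mid C_p$ for $p\geq 5$: reducing mod $q$, both $\bar f$ and $\bar g$ are monic of degree $N(q)$, split completely over $\mathbb{F}_{N(q)}$, and differ by a nonzero constant, and the factorization $\prod_{\alpha\in\mathbb{F}_{N(q)}}(x-\alpha)=x^{N(q)}-x$ places tight constraints on their multiplicity profiles that rule out all such configurations once $N(q)>3$. For (\ref{pc1p4}), I would replay the $2$-adic arguments of Caley and Filaseta--Markovich for $16\mid C_5$, substituting the $q$-adic valuation on $\mathcal{O}$ for $v_2$; since those arguments hinge only on residue-field identities in $\mathbb{F}_2$ together with formal Newton-polygon input, they should transfer cleanly. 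The main obstacle I anticipate is the finite-field bookkeeping in (\ref{pc1p2}) when $N(q)=p^k$ with $k>1$: Fermat's little theorem in $\mathbb{F}_{p^k}$ reads $\alpha^{p^k}=\alpha$ rather than $\alpha^p=\alpha$, so any identity in the $\mathbb{Z}$ proof that secretly relied on $p$th-power cyclicity must be re-derived using the full Galois structure of $\mathbb{F}_{p^k}$ over $\mathbb{F}_p$.
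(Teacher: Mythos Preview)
The paper does not supply its own proof of this proposition: it is stated with the attribution ``Caley~\cite{Caley13}'' and no argument follows. The surrounding text simply records that Caley showed the Rees--Smyth divisibility results ``generalize in a natural way'' to the ring of integers of a number field, and then quotes the statement. So there is no in-paper proof to compare your proposal against.

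That said, your plan is exactly the ``natural generalization'' the paper alludes to, and matches what Caley does in \cite{Caley12,Caley13}: work modulo the prime $q$, use that $\mathcal{O}/q\mathcal{O}\cong\mathbb{F}_{N(q)}$, and rerun the Rees--Smyth arguments with $p$ replaced by $N(q)$. Your treatment of (\ref{pc1p1}) via pigeonhole on residue classes and the product formula \eqref{eqnCn} is the standard argument and is correct as written. Your outline for (\ref{pc1p3}) is also on target: the Multiplicity Lemma uses only that the residue ring is a finite field, so it transfers with $N(q)$ in place of $p$. Parts (\ref{pc1p2}) and (\ref{pc1p4}) are sketchier in your write-up, but the direction is right; the worry you flag about $N(q)=p^k$ with $k>1$ is not a real obstacle, since the relevant identity in $\mathbb{F}_{N(q)}$ is $\prod_{\alpha\in\mathbb{F}_{N(q)}}(x-\alpha)=x^{N(q)}-x$, which is exactly the shape the Rees--Smyth argument needs.
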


Caley also obtained an analogue of Kleiman's result \cite{Kleiman}
for quadratic number fields: in general, primes that split or ramify are
included, though sometimes with a smaller multiplicity.

\begin{prop}[Caley \cite{Caley13}]\label{propCaley2}
Suppose $p<n$ is a rational prime, and $\mathcal{O}$ is the ring of
integers in a number field of degree $2$.
Let $s=\floor{n/p}$, suppose $p^\ell \;\|\; n$, and suppose $s\geq\ell$.
\begin{enumerate}[(i)]
\item If $p$ splits in $\mathcal{O}$ as $p=\pi_1\pi_2$ then
$p^{s-\ell} \mid C_n(\mathcal{O})$.
\item If $p$ ramifies in $\mathcal{O}$ as $p=\pi^2$ then $\pi^{s-\ell} \mid
C_n(\mathcal{O})$.
\end{enumerate}
\end{prop}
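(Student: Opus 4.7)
The plan is to reduce both parts to the single statement that, for any prime $\pi\in\mathcal{O}$ with $\mathcal{O}/\pi \cong \mathbb{F}_p$, the divisibility $\pi^{s-\ell}\mid C_n(\mathcal{O},A,B)$ holds for every ideal \PTE\ solution $(A,B)$ over $\mathcal{O}$. In case (i), both primes $\pi_1$ and $\pi_2$ above $p$ have residue field $\mathbb{F}_p$; since their valuations are independent and $p = \pi_1\pi_2$ up to a unit, we deduce $p^{s-\ell}\mid C_n(A,B)$. In case (ii) the unified statement is itself the claim.

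For the unified claim I would adapt Kleiman's argument from the rational setting, replacing the role of the rational prime $p$ by that of $\pi$. The reductions $\bar f(x) = \prod_i(x-\bar a_i)$ and $\bar g(x) = \prod_i(x-\bar b_i)$ lie in $\mathbb{F}_p[x]$, both split completely, and satisfy $\bar f - \bar g = \bar C$. To prove first that $\pi\mid C$ when $p<n$, suppose for contradiction $\bar C\neq 0$. Any root $r$ of $\bar f$ satisfies $\bar g(r) = -\bar C\neq 0$, and vice versa, so the residue supports $R_A,R_B\subseteq\mathbb{F}_p$ are disjoint and hence $\abs{R_A}+\abs{R_B}\leq p$. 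At the same time, since $\bar C$ is constant, $\bar g^{(j)}(r) = \bar f^{(j)}(r)$ for $j\geq 1$, so each $r\in R_A$ is a root of $\bar g'$ of multiplicity at least $n_r(A)-1$ and each $r\in R_B$ of multiplicity at least $n_r(B)-1$; summing gives at least $2n-\abs{R_A}-\abs{R_B}$ roots of $\bar g'$ counted with multiplicity. Combined with $\deg\bar g'\leq n-1$ (when $\bar g'\not\equiv0$) this forces $\abs{R_A}+\abs{R_B}\geq n+1$, contradicting $p<n$.

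To boost from $\pi\mid C$ up to the full exponent, I would iterate: knowing $n_r(A) = n_r(B)$ for every $r\in\mathbb{F}_p$, within each residue class one forms the shifted multisets $A_r'=\{(a-r)/\pi : a\in A,\ a\equiv r\bmod\pi\}$ and $B_r'$. Expanding $\sum_a a^k = \sum_b b^k$ via $(r+\pi\alpha)^k$ and separating by powers of $\pi$ produces a Vandermonde-type linear system over the residues $r$; using $p<n$ together with the invertibility of $(kr^{k-1})_{k,r=1}^{p-1}$ in $\mathbb{F}_p$, this system forces the shifted moments to satisfy $p_j(A_r')\equiv p_j(B_r')\bmod\pi$, which is precisely the input needed to repeat the base-case argument one layer deeper. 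Each iteration contributes an additional factor of $\pi$ to $v_\pi(C)$.

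The main obstacle, and the origin of the loss of $\ell$ in the exponent, is the degenerate case $p\mid n$ in the base step. When $p\mid n$ the leading coefficient of $\bar g'$ vanishes in $\mathbb{F}_p[x]$, so $\deg\bar g'<n-1$ and the inequality $2n-\abs{R_A}-\abs{R_B}\leq\deg\bar g'$ weakens; in the extreme case $\bar g'\equiv 0$ one has $\bar g = G(x)^p = G(x^p)$ by Frobenius, so $\bar f = \bar g + \bar C$ is also a $p$-th power and the configuration reduces to an ideal \PTE-type problem of size $n/p$, at the cost of one unit in the exponent bound. Running the iteration through these successive reductions until the residual size becomes coprime to $p$ consumes exactly $\ell = v_p(n)$ units of the naive bound $\lceil n/p\rceil$, producing the claimed exponent $s-\ell$.
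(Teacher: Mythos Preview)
The paper does not prove this proposition: it is quoted from Caley \cite{Caley13} and used as input for Tables~\ref{tableReqDivQ1} and~\ref{tableReqDivQ2}. So there is no in-paper argument to compare against; the question is whether your sketch stands on its own.

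Your base-case step is correct and is essentially the standard argument: if $\bar C\neq0$ then $R_A\cap R_B=\varnothing$, and since $\bar f'=\bar g'$, root-counting in $\bar f'$ forces $\abs{R_A}+\abs{R_B}\geq n+1>p$, a contradiction, \emph{provided} $\bar f'\not\equiv0$.

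The iteration step is only a gesture. The Vandermonde system you actually write down, coming from the $\pi^1$-coefficient of the binomial expansion, controls only the \emph{first} moments $p_1(A_r')-p_1(B_r')$. To re-run the base-case argument one layer down you would need the moments of each $(A_r',B_r')$ to agree up through degree $n_r-1$, which requires extracting and solving the systems at every power of $\pi$ simultaneously and keeping track of how these feed into $v_\pi(C)$. None of that bookkeeping is present.

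More seriously, the degenerate case has a real gap. Your Frobenius reduction $\bar f=F^p$, $\bar g=G^p$ correctly passes to monic $F,G$ of degree $n/p$ with $F-G$ a nonzero constant; but after $\ell$ such reductions you land at degree $n/p^\ell$, and your derivative inequality then reads $\abs{R_F}+\abs{R_G}\geq n/p^\ell+1$. This yields a contradiction only when $n/p^\ell\geq p$, i.e.\ $n\geq p^{\ell+1}$, which the hypotheses do \emph{not} guarantee. Take $p=3$, $n=6$ (so $\ell=1$, $s-\ell=1$, and the proposition still asserts $\pi\mid C_6$): one reduction gives degree-$2$ polynomials over $\mathbb{F}_3$, and $F=x^2$, $G=x^2-1$ satisfy all your local constraints with $F-G=1\neq0$. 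So the derivative argument, even with the Frobenius descent, does not by itself establish $\pi\mid C$, and without that your iteration never starts. A further ingredient (a lifting obstruction, or a direct moment argument in $\mathcal{O}$ rather than in $\mathbb{F}_p$) is needed, and your outline does not supply it.
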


Using these facts, we can enumerate the required divisors for the \PTE\
constant over any quadratic number field whose ring of integers is a \UFD\@.
In this article we treat imaginary quadratic fields with class number $1$,
so using these propositions, together with the second column of
Table~\ref{tableReqDivZ}, we can record required divisors for
$C_n(\mathcal{O})$ in each ring we consider.
These appear in Tables~\ref{tableReqDivQ1} and~\ref{tableReqDivQ2} for $5\leq
n\leq 20$ for the number fields $\mathbb{Q}(i)$, $\mathbb{Q}(i\sqrt{2})$,
$\mathbb{Q}(\omega)$, and $\mathbb{Q}(i\sqrt{7})$.
Here and throughout, $\omega$ denotes the primitive third root of unity
$e^{2\pi i/3}$.
In essence, every rational prime from the original table for
$C_n(\mathbb{Z})$ that splits or ramifies in $\mathcal{O}$ appears in the
table for $\mathcal{O}$.
We use $\rho_p$ in Tables~\ref{tableReqDivQ1} and~\ref{tableReqDivQ2} for a
required factor of a rational prime $p$ that ramifies in $\mathcal{O}$.
Inert primes occasionally appear as well, when they satisfy part
(\ref{pc1p3}) of Proposition~\ref{propCaley1}.

We define $C_n'(\mathcal{O})$ in the same way for symmetric ideal \PTE\ 
solutions over $\mathcal{O}$ of size $n$.
In this case, we also inherit the primes from Table~\ref{tableReqDivZ} that
split in $\mathcal{O}$: suppose $p$ splits in $\mathcal{O}$ as
$p=\pi_1\pi_2$, and that $p$ appears in the last column of
Table~\ref{tableReqDivZ} as a required divisor for $n$ in the symmetric
case.
Suppose also that $(A,B)$ is a symmetric solution of size $n$ over $\mathcal{O}$
where $\pi_1\nmid C_n(\mathcal{O},A,B)$.
Then we can reduce this equation mod $\pi_1$ to obtain a solution with
nonzero constant over $\mathcal{O}/(\pi_1)\cong\mathbb{F}_p$, so we can map
this to a solution over $\mathbb{F}_p$ with nonzero constant.
By Lemma~\ref{lemRSML} there exists a polynomial $h\in\mathbb{F}_p[x]$ with the
properties stated there, but we already verified no such polynomial can
exist when we showed that $p\mid C_n'(\mathbb{Z})$.
Therefore $\pi_1$ (and $\pi_2$, hence $p$) must divide $C_n'(\mathcal{O})$.

Required divisors in $C_n'(\mathcal{O})$ also appear in
Tables~\ref{tableReqDivQ1} and~\ref{tableReqDivQ2} in boldface.
We remark that Table~\ref{tableReqDivQ1} includes a number of required
divisors for $\mathbb{Z}[i]$ that are missing from \cite[Table~1]{Caley13},
which reported on values for $n\leq15$.
Part of this is the fact that Caley's table does not include the primes
required for the symmetric case.
However, some required factors are missing for the broader ideal case,
including $(3+2i)(3-2i)=13$ at $n=8$ and $n=11$, and $(4+i)(4-i)=17$ at
$n=10$, $11$, $12$, and $15$.
Also, we record larger multiplicities (using
Proposition~\ref{propCaley1}(\ref{pc1p1})) at a number of primes in our table,
compared to those listed in \cite{Caley13}.
New information determined here on required divisors for $n\leq15$ in the case
of the Gaussian integers is displayed in a boxed format in
Table~\ref{tableReqDivQ1}.

\begin{table}[tbp]
\caption{Required divisors in $C_n(\mathcal{O})$ and
$C_n'(\mathcal{O})$, for $\mathcal{O}=\mathbb{Z}[i]$ and
$\mathbb{Z}[i\sqrt{2}]$.
Boldface factors denote required divisors for $C_n'(\mathcal{O})$ only.
Boxed entries for $n\leq15$ in the table for the Gaussian integers indicate
factors reported here for the first time (not recorded in
\cite{Caley13}).}\label{tableReqDivQ1}
\begin{center}
\small
\begin{tabular}{|c|c|c|}\hline
\TS$n$ & $\mathbb{Z}[i]$ ($\rho_2=1+i$) & $\mathbb{Z}[i\sqrt{2}]$ ($\rho_2=i\sqrt{2}$)\\\hline
\TS 5 & $\rho_2^4\cdot5$ &
      $\rho_2^4\cdot3^2$\\
 6  & $\rho_2^3\cdot\boxed{5^2}$ &
      $\rho_2^3\cdot3^2$\\
 7  & $\rho_2^4\cdot3\cdot\boxed{5^2}$ &
      $\rho_2^3\cdot3^3\cdot11\cdot\mathbf{19}$\\
 8  & $\rho_2^4\cdot\boxed{5^2\cdot13}$ &
      $3^3\cdot11$\\
 9  & $\rho_2^5\cdot3^2\cdot\boxed{5^2}\cdot13$ &
      $\rho_2^5\cdot3^3\cdot11$\\
10  & $\rho_2^5\cdot\;\boxed{5^2}\cdot13\cdot\boxed{17}$ &
      $\rho_2^5\cdot3^4\cdot17$\\
11  & $\rho_2^6\cdot\boxed{5^3\cdot13\cdot17}$ &
      $\rho_2^6\cdot3^4\cdot11\cdot17$\\
12  & $\rho_2^6\cdot\boxed{5^3\cdot17\cdot\mathbf{29}}$ &
      $\rho_2^6\cdot3^4\cdot11^2\cdot17\cdot19$\\
13  & $\rho_2^7\cdot\boxed{5^3}\cdot13\cdot17\cdot\boxed{\mathbf{29\cdot37\cdot 41}}$ &
      $\rho_2^7\cdot3^5\cdot11^2\cdot17\cdot\mathbf{41}$\\
14  & $\rho_2^7\cdot\boxed{5^3\cdot13^2}\cdot17\cdot\boxed{\mathbf{37}}$ &
      $\rho_2^7\cdot3^5\cdot11^2\cdot17\cdot19$\\
15  & $\rho_2^8\cdot\boxed{5^3\cdot13^2\cdot17\cdot\mathbf{37\cdot41}}$ &
      $\rho_2^8\cdot3^5\cdot11^2\cdot19\cdot\mathbf{41\cdot43}$\\
16  & $\rho_2^8\cdot5^4\cdot13^2\cdot\mathbf{29\cdot37\cdot41\cdot53}$ &
      $3^6\cdot11^2\cdot19\cdot\mathbf{41\cdot43}$\\
17  & $\rho_2^9\cdot5^4\cdot13^2\cdot17\cdot29\cdot\mathbf{37\cdot41\cdot53}$ &
      $\rho_2^9\cdot3^6\cdot11^2\cdot17\cdot19\cdot\mathbf{41\cdot43}$\\
18  & $\rho_2^9\cdot5^4\cdot13^2\cdot17^2\cdot29\cdot\mathbf{41}$ &
      $\rho_2^9\cdot3^6\cdot11^2\cdot17^2\cdot\mathbf{41\cdot43\cdot59}$\\
19  & $\rho_2^{10}\cdot5^4\cdot13^2\cdot17^2\cdot29\cdot\mathbf{41\cdot53}$ &
      $\rho_2^{10}\cdot3^7\cdot11^2\cdot17^2\cdot19\cdot\mathbf{41\cdot43\cdot59}$\\
20  & $\rho_2^{10}\cdot5^4\cdot13^2\cdot17^2\cdot29\cdot\mathbf{37\cdot53}$ &
      $\rho_2^{10}\cdot3^7\cdot11^2\cdot17^2\cdot19^2\cdot\mathbf{43}$\\\hline
\end{tabular}
\end{center}
\end{table}

\begin{table}[tbp]
\caption{Required divisors in $C_n(\mathcal{O})$ and
$C_n'(\mathcal{O})$, for $\mathcal{O}=\mathbb{Z}[e^{2\pi i/3}]$ and
$\mathbb{Z}[(1+i\sqrt{7})/2]$.
Boldface factors denote required divisors for $C_n'(\mathcal{O})$ 
only.}\label{tableReqDivQ2} 
\begin{center}
\small
\begin{tabular}{|c|c|c|}\hline
\TS$n$ & $\mathbb{Z}[\omega]$ ($\rho_3=i\sqrt{3}$) &
$\mathbb{Z}[(1+i\sqrt{7})/2]$ ($\rho_7=i\sqrt{7}$)\\\hline
\TS 5 & $\rho_3^2\cdot7$ &
   $2^4$\\
6  & &
   $2^3$\\
7  & $\rho_3^3\cdot7\cdot\mathbf{19}$ &
   $2^4\cdot3\cdot\rho_7^2\cdot11$\\
8  & $\rho_3^3\cdot7^2\cdot13$ &
   $11$\\
9  & $7^2\cdot13$ &
   $2^5\cdot3^2\cdot11$\\
10 & $\rho_3^4\cdot7^2\cdot13$ &
   $2^5\cdot23$\\
11 & $\rho_3^4\cdot7^2\cdot13\cdot19\cdot\mathbf{31}$ &
   $2^6\cdot11$\\
12 & $\rho_3^4\cdot7^2\cdot19$ &
   $2^6\cdot11^2\cdot\mathbf{29}$\\
13 & $\rho_3^5\cdot7^2\cdot13\cdot19\cdot\mathbf{31\cdot37}$ &
   $2^7\cdot11^2\cdot23\cdot\mathbf{29\cdot37}$\\
14 & $\rho_3^5\cdot7^2\cdot13^2\cdot19\cdot\mathbf{31\cdot37}$ &
   $2^7\cdot11^2\cdot23\cdot\mathbf{37}$\\
15 & $\rho_3^5\cdot7^3\cdot13^2\cdot19\cdot\mathbf{37\cdot43}$ &
   $2^8\cdot11^2\cdot23\cdot\mathbf{37\cdot43}$\\
16 & $\rho_3^6\cdot7^3\cdot13^2\cdot19\cdot\mathbf{37\cdot43}$ &
   $11^2\cdot23\cdot\mathbf{29\cdot37\cdot43\cdot53}$\\
17 & $\rho_3^6\cdot7^3\cdot13^2\cdot19\cdot\mathbf{31\cdot37\cdot43}$ &
   $2^9\cdot11^2\cdot29\cdot\mathbf{37\cdot43\cdot53}$\\
18 & $\rho_3^7\cdot7^3\cdot13^2\cdot\mathbf{31\cdot43}$ &
   $2^9\cdot11^2\cdot29\cdot\mathbf{43}$\\
19 & $\rho_3^7\cdot7^3\cdot13^2\cdot19\cdot\mathbf{43}$ &
   $2^{10}\cdot11^2\cdot23\cdot29\cdot\mathbf{43\cdot53}$\\
20 & $\rho_3^7\cdot7^3\cdot13^2\cdot19^2\cdot31\cdot\mathbf{37\cdot43}$ &
   $2^{10}\cdot11^2\cdot23\cdot29\cdot\mathbf{37\cdot43\cdot53}$\\\hline
\end{tabular}
\end{center}
\end{table}

\section{Computations in Some Imaginary Quadratic Number Fields}\label{secCompQ}

We describe some searches for symmetric ideal solutions in the \PTE\
problem over a number of quadratic number fields where the ring of integers
is a \UFD\@.
We further restrict our attention to qualifying imaginary quadratic number
fields, since the unit groups here are finite.
Thus, we need to use one of $\mathbb{Q}(i\sqrt{d})$ with $d\in\{1, 2, 3, 7,
11, 19, 43, 163\}$.
We concentrate here on the first six such fields.

Our method is similar to the sieving procedure described over $\mathbb{Z}$
in \cite{BLP} and employed in \cite{Caley13} for $\mathbb{Z}[i]$.
We require the following notation.
Let $\mathcal{O}$ denote the ring of integers of the field being
considered, and let $U$ denote its group of units.
For a fixed $n$, let $p$ denote the largest rational prime that is known to
be required to divide the constant $C'_n(\mathcal{O})$: this is listed in
Table~\ref{tableReqDivQ1} or~\ref{tableReqDivQ2} for $d\in\{1,2,3,7\}$.
In the cases under consideration, this prime splits in $\mathcal{O}$, and
we write $\pi_1$ and $\pi_2$ for its two prime factors in this ring.
$H$ is a positive integer governing the size of the search space.
We first describe the algorithm for odd~$n$.

\begin{alg}\label{algPTEQ1}
Searching for symmetric ideal solutions of odd size $n$ in the \PTE\
problem over the \UFD\ $\mathcal{O}$.
\end{alg}

\begin{iolist}{Output}
\item[Input]
An odd integer $n$, primes $\pi_1$, $\pi_2$ in $\mathcal{O}$ as above, and
a positive integer $H$.
\item[Output]
Symmetric ideal solutions of size $n$ in $\mathcal{O}$, with elements
selected subject to the parameter $H$.
\item[Description]
\end{iolist}

\begin{biglabellist}{Step~5}
\item[Step~1]
Construct $S_1$, the set of all nonzero multiples of $\pi_1$ in
$\mathcal{O}$ with norm at most $H$, keeping just one representative with
respect to unit multiples, and create $T_1=\{su : s\in S_1,\, u\in U\}$.
Construct $S_2$ and $T_2$ in the same way using the prime $\pi_2$.
Finally, create $V$, the set of nonzero algebraic integers from
$\mathcal{O}$ having norm at most $H/p$ (with $p=\pi_1\pi_2$).

\item[Step~2]
For each possible selection of $a_1\in S_1$, perform Step~3.

\item[Step~3]
Assume $a_1, \ldots, a_k$ have been selected.
If $k=(n+1)/2$, perform Step~4.
Otherwise, let $j=1$ if $k$ is even and $j=2$ if $k$ is odd.
If $\pi_j \mid \sum_{i=0}^{k-1} a_i$, then for each possible choice of
$a_{k+1}\in V$, perform Step~3.
Otherwise, for each possible choice of $t\in T_j$, set $a_{k+1}=t-a_k$
(disallowing $0$ or $\pm a_i$ for $i\leq k$) and perform Step~3.

\item[Step~4]
For each prime $\pi\in\mathcal{O}$ with norm less than $p$ which is a
required divisor for $C_n'(\mathcal{O})$, test if either $\pi\mid a_j$ for
some $j$, or $\pi \mid (a_i+a_j)$ for some pair $i$, $j$ with $1\leq
i<j\leq(n+1)/2$.
This is required if the current elements can be extended to a symmetric
ideal solution.
If this fails for some prime $\pi$, reject this choice and continue the
enumeration.
If it succeeds for each such prime, perform Step~5.

\item[Step~5]
Using Lagrange interpolation, construct the polynomial $q_1(x)$ as in
\eqref{eqnPolyInterp1}, and test if this polynomial splits over
$\mathcal{O}$.
If it does, report its roots as a symmetric ideal \PTE\ solution of size
$n$ in this ring, otherwise reject this set and continue the enumeration.
\end{biglabellist}

The algorithm for even $n$ is similar.
Here, we may assume $a_i\equiv b_i \bmod \pi$ for each prime $\pi\mid
C_n'(\mathcal{O})$, so we may demand that
\[
a_i^2\equiv b_i^2 \pmod{\pi_1}
\]
and
\[
(a_{i+1}^2-b_i^2)\sum_{j=1}^i (a_j^2-b_j^2) \equiv 0 \pmod{\pi_2}
\]
in making our selections.

\begin{alg}\label{algPTEQ0}
Searching for symmetric ideal solutions of even size $n$ in the \PTE\
problem over $\mathcal{O}$.
\end{alg}

\begin{iolist}{Output}
\item[Input]
An even integer $n$, primes $\pi_1$, $\pi_2$ in $\mathcal{O}$ as above, and
positive integers $H$ and $\ell$.
\item[Output]
Symmetric ideal solutions of size $n$ in $\mathcal{O}$, with elements
selected subject to the parameters $H$ and $\ell$.
\item[Description]
\end{iolist}

\begin{biglabellist}{Step~5}
\item[Step~1]
In addition to $S_1$, $T_1$, and $V$ as in Algorithm~\ref{algPTEQ1},
construct the set $S_0$ consisting of the nonzero algebraic integers in
$\mathcal{O}$ with norm at most $H/\ell$, keeping just one representative
with respect to unit multiples.

\item[Step~2]
For each possible selection of $a_1\in S_0$, perform Step~3.

\item[Step~3]
Assume $k$ elements have been selected: $a_1, \ldots, a_{\ceil{k/2}}$
and $b_1, \ldots,\allowbreak b_{\floor{k/2}}$.
If $k=(n+1)/2$, perform Step~4.
Otherwise, suppose first $k$ is odd, so we must select $b_{(k+1)/2}$.
For each $t\in T_1$ set $b_{(k+1)/2}=a_{(k+1)/2}+t$ and invoke Step~3.
On the other hand, if $k$ is even, so that we must next select $a_{k/2+1}$,
first test if $\pi_2 \mid \sum_{i=1}^{k/2} (a_i^2-b_i^2)$.
If this holds, then for each $v\in V$ set $a_{k/2+1}=v$ and invoke Step~3.
If not, then for each $t\in T_2$ set $a_{k/2+1}=b_{k/2}+t$ and call Step~3.
Throughout, we take care to avoid selecting the value $0$, or a value
already selected, or the negation of a value already selected.

\item[Step~4]
For each prime $\pi\in\mathcal{O}$ with norm less than $p$ which is a
required divisor for $C_n'(\mathcal{O})$, test if $\pi\mid(a_i^2-b_j^2)$
for some $i$ and $j$.
If this fails for some prime $\pi$, reject this choice and continue the
enumeration.
If it succeeds for each such prime, perform Step~5.

\item[Step~5]
Using Lagrange interpolation, construct the polynomial $q_0(x)$ as in
\eqref{eqnPolyInterp0} with $k=\floor{n/4}$, and test if this polynomial
splits over $\mathcal{O}$.
If it does, we perform this process a second time, reversing the roles of
the $a_i$ and $b_j$ in \eqref{eqnPolyInterp0} and setting $k=\ceil{n/4}$,
and check if this polynomial splits as well.
If both succeed, then we can recover the remaining $a_i$ values from the
first polynomial, and the remaining $b_j$ values from the second, and we
have a symmetric ideal \PTE\ solution of size $n$ in this ring.
Otherwise, we reject this set and continue the enumeration.
\end{biglabellist}

\begin{table}[tbp]
\caption{Searches performed for symmetric ideal \PTE\ solutions
in $\mathbb{Q}(i\sqrt{d})$.}\label{tableQNFSearches}
\small
\begin{tabular}{c@{\quad}c@{\quad}c}
\begin{tabular}[t]{|c|cccc|}\hline
\TS $d$ &  $n$ &  $p$ &    $H$ & $\ell$\\\hline
     &  $9$ & $13$ & $1400$ & -- \\
     & $10$ & $17$ &  $500$ & $4$ \\
     & $11$ & $17$ &  $750$ & -- \\
 $1$ & $12$ & $29$ &  $500$ & $7$ \\
     & $13$ & $41$ &  $680$ & -- \\
     & $14$ & $37$ & $300$ & $8$\\
     & $16$ & $53$ & $250$ & $12$\\\hline
     &  $9$ & $11$ & $1250$ & -- \\
     & $10$ & $17$ &  $800$ & $8$ \\
 $2$ & $11$ & $17$ & $1000$ & -- \\
     & $12$ & $19$ &  $450$ & $9$ \\
     & $13$ & $41$ & $1000$ & -- \\\hline
\end{tabular} &
\begin{tabular}[t]{|c|cccc|}\hline
\TS $d$ &  $n$ &  $p$ &    $H$ & $\ell$\\\hline
     &  $9$ & $13$ & $1300$ & -- \\
     & $10$ & $13$ &  $400$ & $2.2$ \\
     & $11$ & $31$ & $1250$ & -- \\
 $3$ & $12$ & $19$ &  $250$ & $3$ \\
     & $13$ & $37$ &  $500$ & -- \\
     & $14$ & $37$ & $250$ & $5$\\
     & $15$ & $43$ & $350$ & -- \\\hline
     &  $9$ & $11$ & $1200$ & -- \\
     & $10$ & $23$ & $1000$ & $12$ \\
 $7$ & $11$ & $11$ &  $550$ & -- \\
     & $12$ & $29$ &  $600$ & $12$ \\
     & $13$ & $37$ &  $800$ & -- \\\hline
\end{tabular} &
\begin{tabular}[t]{|c|cccc|}\hline
\TS $d$ &  $n$ &  $p$ &    $H$ & $\ell$\\\hline
     &  $9$ & $11$ & $1250$ & -- \\
     & $10$ & $23$ & $1100$ & $11$ \\
$11$ & $11$ & $31$ & $1800$ & -- \\
     & $12$ & $11$ &  $160$ & $5$ \\
     & $13$ & $37$ &  $900$ & -- \\\hline
     &  $9$ & $11$ & $2100$ & -- \\
     & $10$ & $23$ & $1500$ & $11$ \\
$19$ & $11$ & $19$ & $1400$ & -- \\
     & $12$ & $19$ &  $500$ & $9$ \\
     & $13$ & $23$ &  $800$ & -- \\\hline
\end{tabular}
\end{tabular}
\end{table}

Table~\ref{tableQNFSearches} records the searches we
performed using SageMath~\cite{Sage} for symmetric ideal solutions in
the \PTE\ problem over $\mathbb{Q}(i\sqrt{d})$ for
$d\in\{1,2,3,7,11,19\}$ and $n\in\{9,10,11,12,13\}$, as well as a few
larger values of $n$ for $d=1$ and $d=3$.  These searches uncovered a
number of primitive solutions, i.e., solutions having no common factor
in the corresponding ring of integers $\mathcal{O}$.  Over the
Gaussian integers, while we find no solutions at the odd values of $n$
tested, we find five solutions at $n=10$:
\begin{gather}
\begin{split}\label{eqnGI10a}
A&=\pm\{3+4i, 4+4i, 4-4i, 6-3i, 6+i\},\;\\
  B&=\pm\{3i, 4+5i, 5-4i, 6-2i, 6+2i\};
\end{split}\\
\begin{split}\label{eqnGI10b}
A&=\pm\{2-11i, 3-8i, 4-9i, 7-5i, 7+i\},\;\\
  B&=\pm\{1-11i, 4-11i, 5, 6-i, 7-7i\};
\end{split}\\\label{eqnGI10c}
A=\pm\{3+4i, 4-5i, 5+4i, 6-3i, 6+i\},\;
  B=\overline{A};\\\label{eqnGI10d}
A=\pm\{3+14i, 4-12i, 10-7i, 10+i, 13+2i\},\;
  B=i\overline{A};\\\label{eqnGI10e}
A=\pm\{1-10i, 8-25i, 17-22i, 22-6i, 24-13i\},\;
  B=i\overline{A}.
\end{gather}
The first two of these were found by Caley \cite{Caley13}, although the
set corresponding to the solution~\eqref{eqnGI10a} reported in \cite{Caley13} is not
primitive (each element is a multiple of $1+i$).
These five solutions are illustrated in Figure~\ref{figGIpics}(a--e).

At $n=12$ in $\mathbb{Z}[i]$, we find a single primitive solution:
\begin{equation}\label{eqnGI12a}
A = \bigcup_{k=0}^3 i^k \{3+10i, 11+6i, 8+10i\},\;
  B = \overline{A}.
\end{equation}
This set is shown in Figure~\ref{figGIpics}(f).

In $\mathbb{Q}(i\sqrt{2})$, we find one primitive solution at $n=9$.
Writing $\alpha$ for $i\sqrt{2}$, this solution is
\begin{equation}\label{eqnm29a}
\begin{split}
A &= \{-3-3\alpha, -2+3\alpha, -1-5\alpha, \alpha, 1-5\alpha, 2+3\alpha,
3-3\alpha, 4\alpha, 5\alpha\},\\
B &= -A,
\end{split}
\end{equation}
and is displayed in Figure~\ref{figIS2pics}.
No solutions were found here for $10\leq n\leq 13$.

Over the Eisenstein integers, we discover one primitive symmetric ideal
solution in the \PTE\ problem with $n=9$:
\begin{equation}\label{eqnEI9a}
\begin{split}
A &= \{-27\omega-26, -24\omega-54, -21\omega-46, -2\omega-24, 2\omega+14,\\
&\qquad 7\omega+11, 17\omega+43, 23\omega+27, 25\omega+55\},\;
  B = -A.
\end{split}
\end{equation}
It is displayed in Figure~\ref{figEIpics}(a).
No solutions were found in this ring at $n=10$, $11$, or $13$, but we found
solutions here at $n=12$.
First, Algorithm~\ref{algPTEQ0} determined the primitive solution
\begin{equation}\label{eqnEI12a}
A = \bigcup_{j=0}^5 \zeta_6^j \{3, 2\omega+9\},\;
  B = \bigcup_{j=0}^5 \zeta_6^j \{2\omega+11, 7\omega+11\},
\end{equation}
where $\zeta_6$ is a primitive sixth root of unity.
This is displayed in Figure~\ref{figEIpics}(b).

The sixfold symmetry evident here suggested an alternative strategy.
Suppose $z_i=r_i+\omega s_i$ for $1\leq i\leq4$ are Eisenstein integers.
It is straightforward to check that the $k$th moments of $\cup_{j=0}^5
\zeta_6^j \{z_1, z_2\}$ and $\cup_{j=0}^5 \zeta_6^j \{z_3, z_4\}$ match for
$1\leq k\leq 5$ and $7\leq k\leq11$.
Requiring that the sixth moments match as well produces a pair of
equations in the $r_i$ and $s_i$ that need to be satisfied to produce a
symmetric ideal \PTE\ solution in the Eisenstein integers at $n=12$.
We may therefore search for solutions by checking these conditions over a
set of values for the $r_i$ and $s_i$.
To reduce the search space, we may assume that $\arg(z_i)\in[0,\pi/3)$ for
each $i$, $r_1\leq r_2$, and $r_3\leq r_4$.
We also assume that $s_1=0$, so that $z_1$ is a positive rational integer,
and we assume that $r_2\leq r_3$.
A search over $r_4\leq300$ implemented in C++ using this strategy found
another symmetric ideal \PTE\ solution in the Eisenstein integers at
$n=12$: $z_1=27$, $z_2=22(2+\omega)$, $z_3=69$, $z_4=40(2+\omega)$.
Removing the common factor $1+2\omega$ produces the primitive solution
\begin{equation}\label{eqnEI12b}
A = \bigcup_{j=0}^5 \zeta_6^j \{9(1+2\omega), 22\omega\},\;
  B = \bigcup_{j=0}^5 \zeta_6^j \{23(1+2\omega), 40\omega\},
\end{equation}
depicted in Figure~\ref{figEIpics}(c).

We also searched for solutions with $n=18$ in the Eisenstein integers using
a variation of this algorithm, where we begin with six such integers
$z_i=r_i+\omega s_i$, and search for solutions of the equations that arise
from requiring that the sixth and 12th moments agree, as the others up
to the 17th are automatically satisfied.
No solutions were found here, with the $r_i$ increasing and $r_6\leq 64$.

Finally, we add that our searches with $d=7$, $d=11$, and $d=19$ did not
find any symmetric ideal \PTE\ solutions over the ranges tested for $9\leq
n\leq13$.

\begin{figure}[tbp]
\begin{tabular}{ccc}
\includegraphics[width=1.7in]{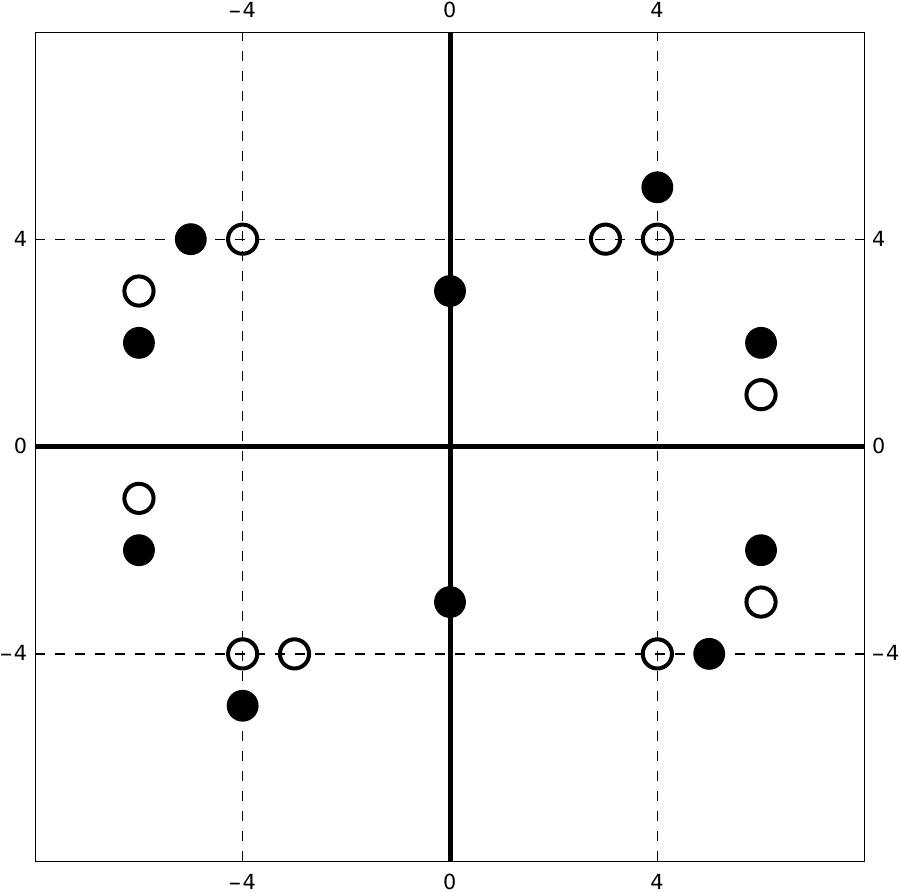} &
\includegraphics[width=1.7in]{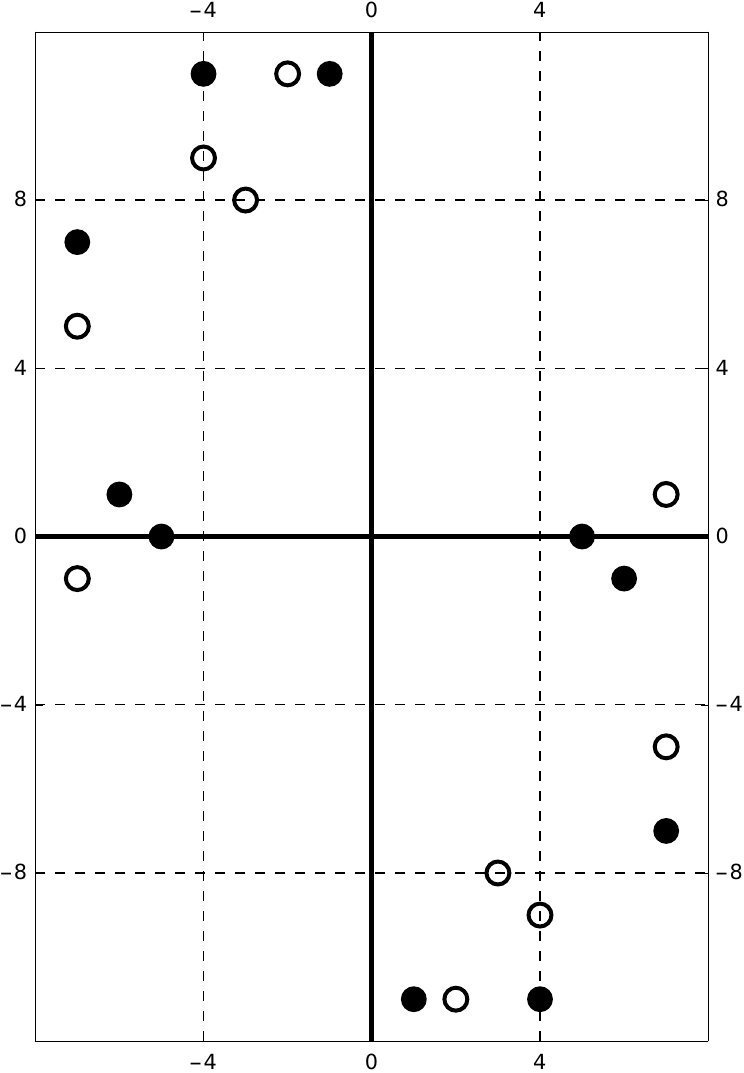} &
\includegraphics[width=1.7in]{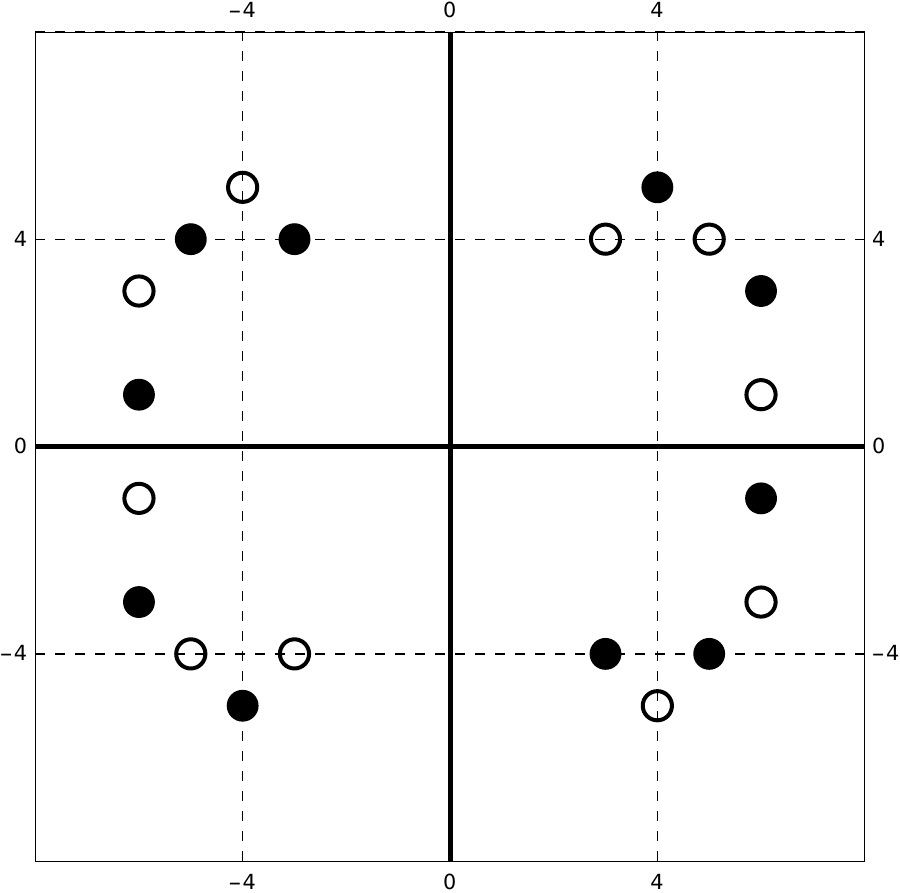}\\
(a) $n=10$ \eqref{eqnGI10a} (Caley) & (b) $n=10$ \eqref{eqnGI10b} (Caley) &
(c) $n=10$ \eqref{eqnGI10c}\\[8pt]
\includegraphics[width=1.7in]{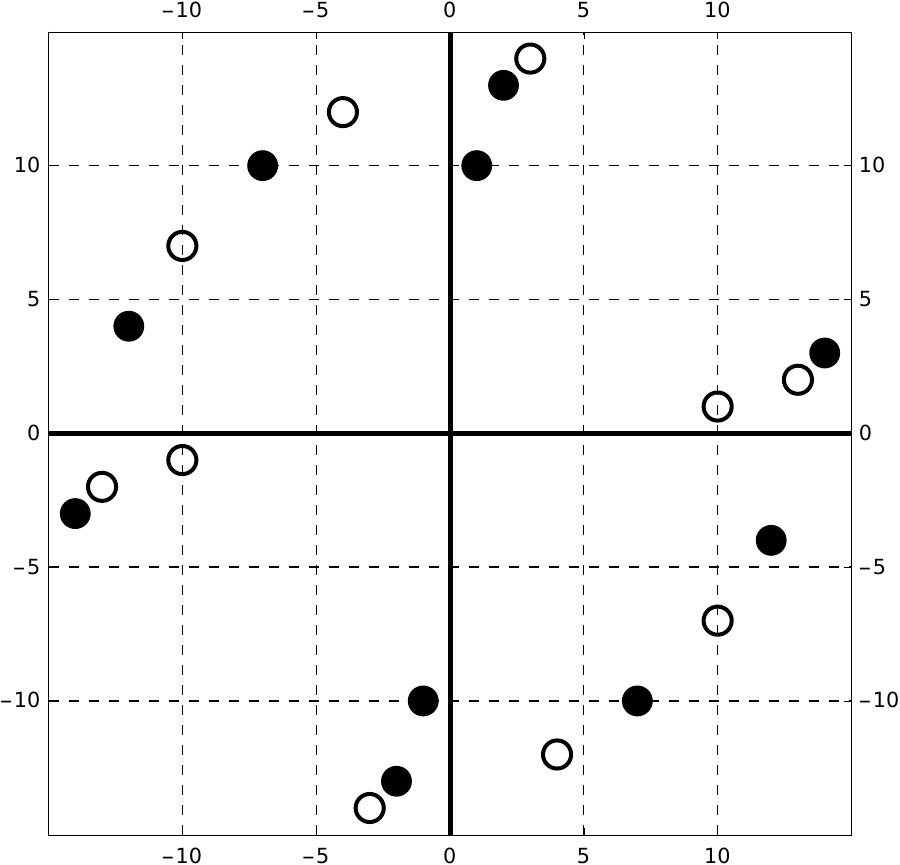} &
\includegraphics[width=1.7in]{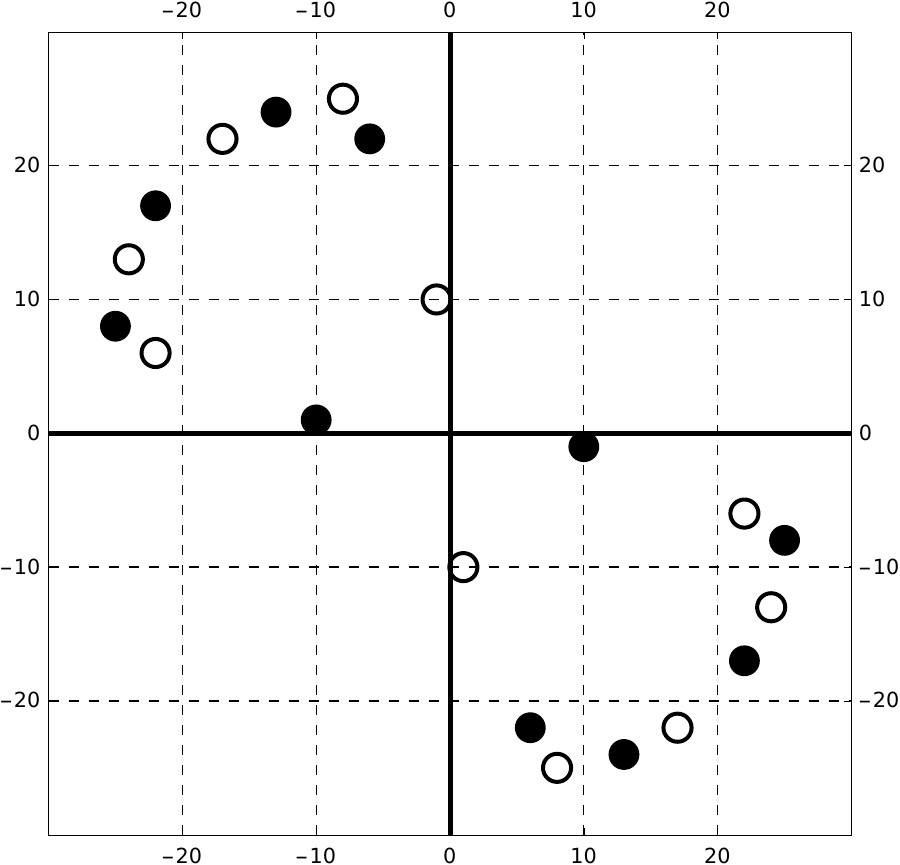} &
\includegraphics[width=1.7in]{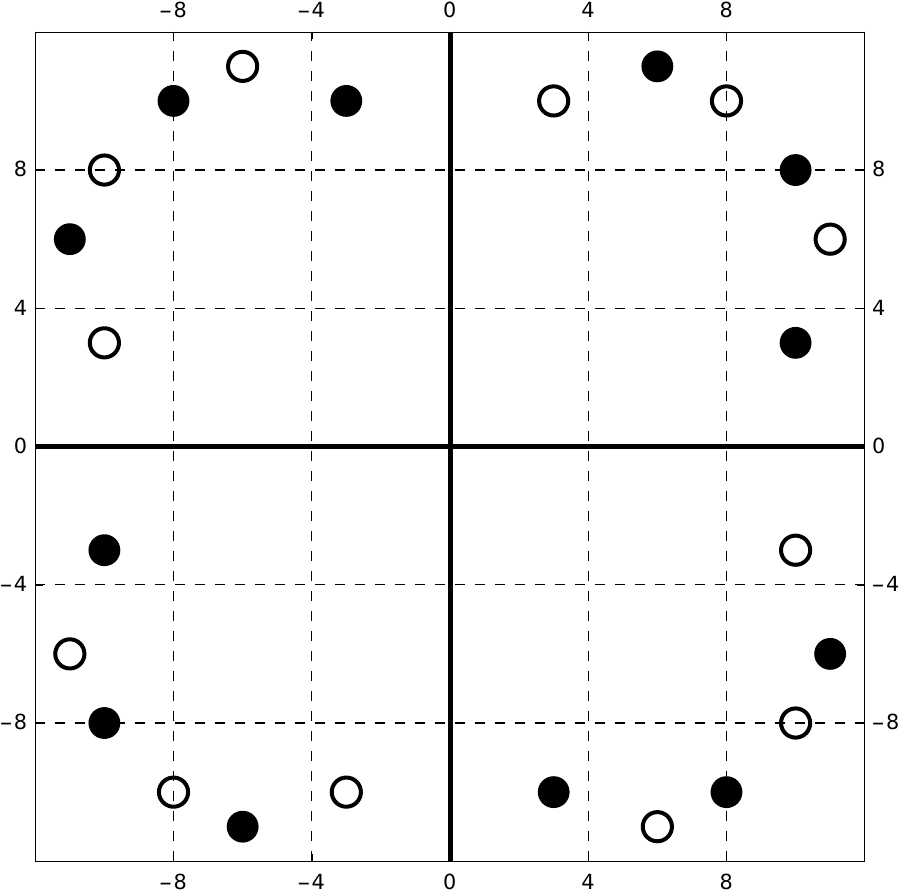}\\
(d) $n=10$ \eqref{eqnGI10d} & (e) $n=10$ \eqref{eqnGI10e} &
(f) $n=12$ \eqref{eqnGI12a}
\end{tabular}
\caption{Symmetric ideal \PTE\ solutions in $\mathbb{Z}[i]$ with
size $n=10$ or $n=12$.}\label{figGIpics}
\end{figure}

\begin{figure}[tbp]
\centering{
\includegraphics[width=1.25in]{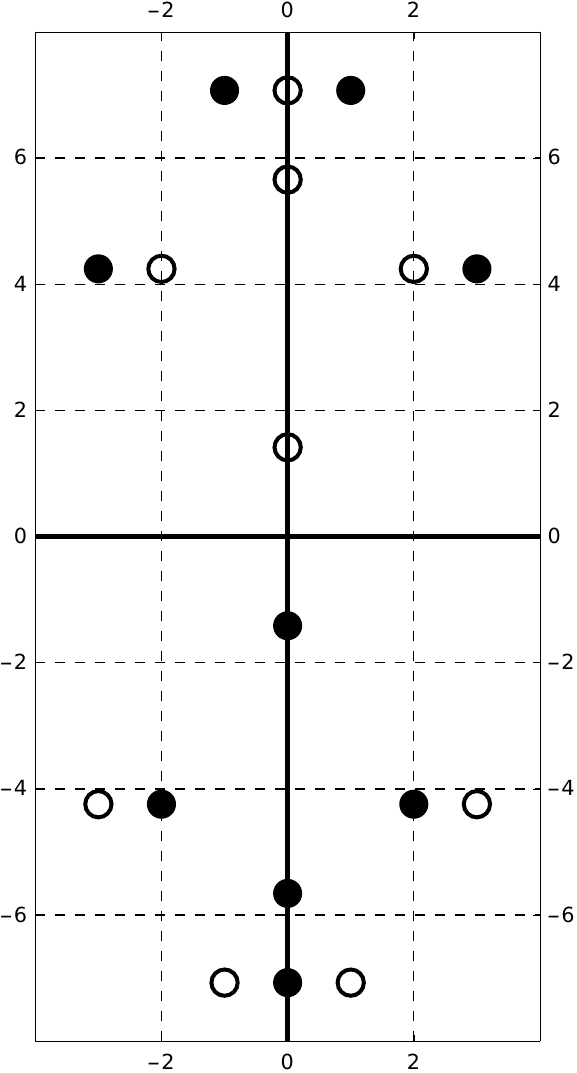}
}
\caption{The symmetric ideal \PTE\ solution \eqref{eqnm29a} in
$\mathbb{Z}[i\sqrt{2}]$ with size $n=9$.}\label{figIS2pics}
\end{figure}

\begin{figure}[tbp]
\centering{
\scalebox{.80}{\begin{tabular}{cc}
\multicolumn{2}{c}{\includegraphics[width=1.75in]{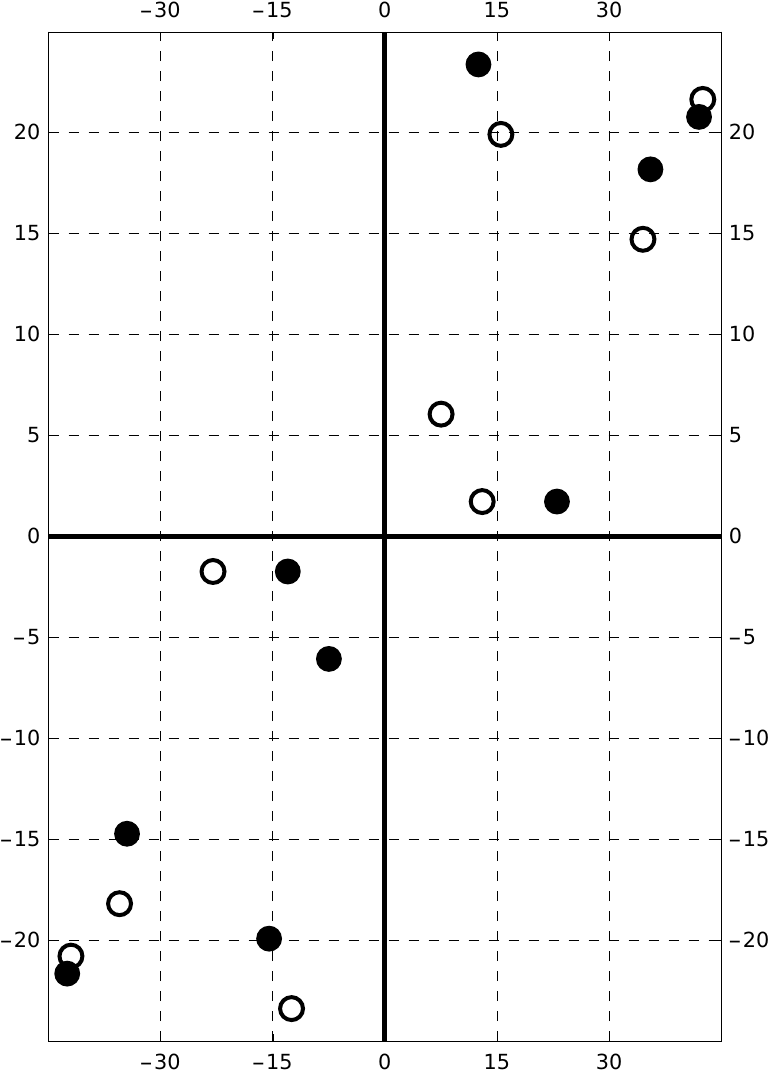}}\\
\multicolumn{2}{c}{(a) $n=9$ \eqref{eqnEI9a}}\\
\includegraphics[width=1.75in]{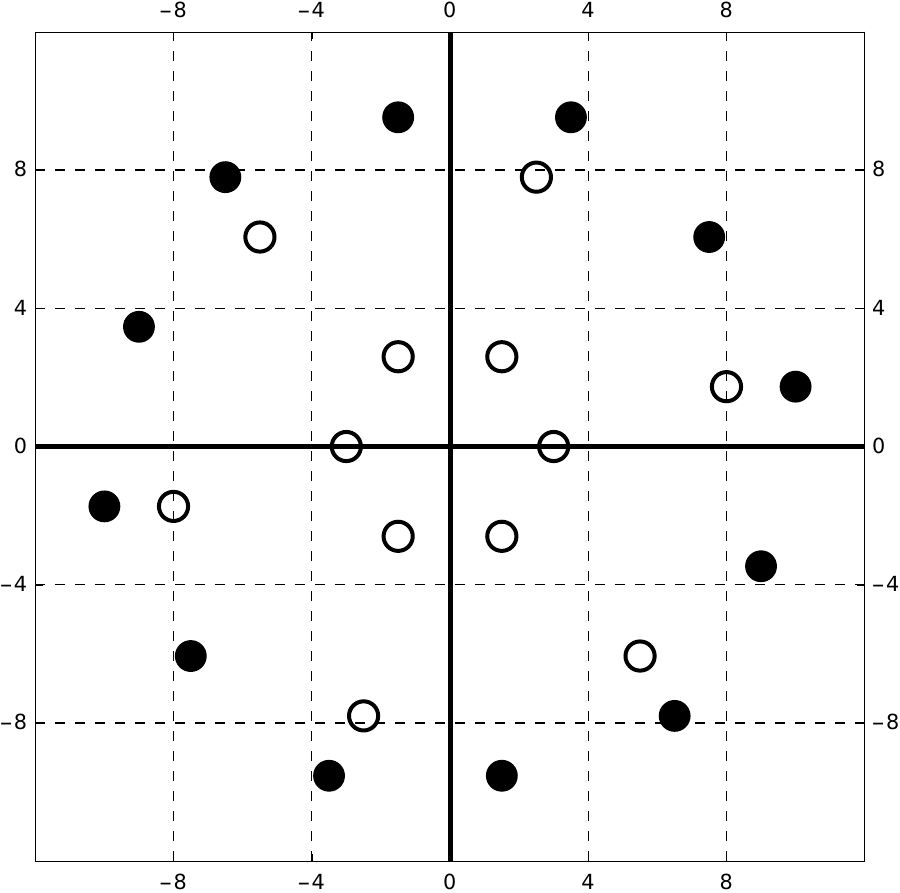} &
\includegraphics[width=1.75in]{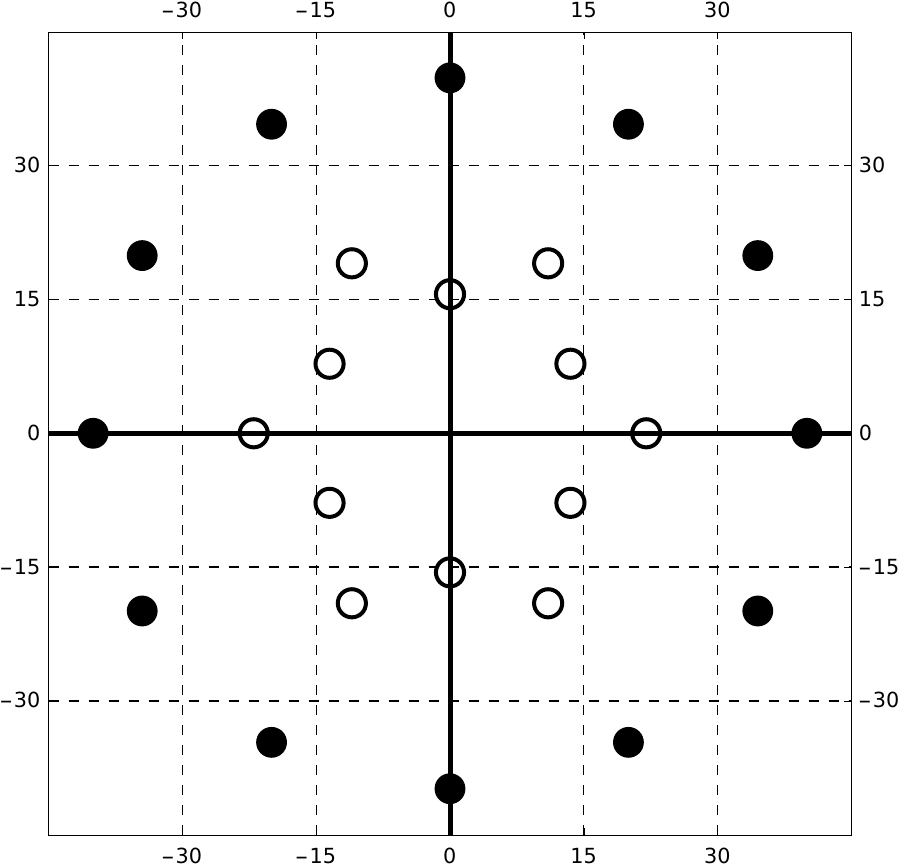}\\
(b) $n=12$ \eqref{eqnEI12a} & (c) $n=12$ \eqref{eqnEI12b}
\end{tabular}}
}
\caption{Symmetric ideal \PTE\ solutions in the Eisenstein
integers with size $n=9$ or $n=12$.}\label{figEIpics}
\end{figure}

\bibliographystyle{plain}

\begin{thebibliography}{10}

\bibitem{AT07}
A.~Alpers and R.~Tijdeman.
\newblock The two-dimensional {P}rouhet--{T}arry--{E}scott problem,
{\em J.  Number Theory}, 123(2):403--412, 2007.
MR2301222

\bibitem{BorweinCEANT}
P.~Borwein.
\newblock {\em Computational Excursions in Analysis and Number Theory},
vol.~10 of {\em CMS Books in Math.}, Springer, New York, 2002.
MR1912495

\bibitem{BI}
P.~Borwein and C.~Ingalls.
\newblock The {P}rouhet--{T}arry--{E}scott problem revisited,
{\em Enseign. Math. (2)}, 40(1--2):3--27, 1994.
MR1279058

\bibitem{BLP}
P.~Borwein, P.~Lison\v{e}k, and C.~Percival.
\newblock Computational investigations of the {P}rouhet--{T}arry--{E}scott
problem, {\em Math. Comp.}, 72(244):2063--2070, 2003.
MR1986822

\bibitem{Magma}
W.~Bosma, J.~Cannon, and C.~Playoust.
\newblock The {M}agma algebra system {I}: the user language,
{\em J. Symbolic Comput.}, 24(3-4):235--265, 1997.
MR1484478

\bibitem{Broadhurst}
D.~Broadhurst.
\newblock A new {P}rouhet--{T}arry--{E}scott solution,
{\em NMBRTHRY listserv}, 26~September 2007.

\bibitem{Caley12}
T.~Caley.
\newblock The {P}rouhet--{T}arry--{E}scott problem, 2012.
\newblock Ph.D. thesis, University of Waterloo.

\bibitem{Caley13}
T.~Caley.
\newblock The {P}rouhet--{T}arry--{E}scott problem for {G}aussian integers,
{\em Math. Comp.}, 82(282):1121--1137, 2013.
MR3008852

\bibitem{Chernick}
J.~Chernick.
\newblock Ideal solutions of the {T}arry--{E}scott problem,
{\em Amer. Math. Monthly}, 44(10):626--633, 1937.
MR1524121

\bibitem{Choudhry17}
A.~Choudhry.
\newblock A new approach to the {T}arry--{E}scott problem,
{\em Int. J. Number Theory}, 13(2):393--417, 2017.
MR3660628

\bibitem{CW}
A.~Choudhry and J.~Wr\'ob{\l}ewski.
\newblock Ideal solutions of the {T}arry--{E}scott problem of degree eleven
with applications to sums of thirteenth powers,
{\em Hardy--Ramanujan J.}, 31:1--13, 2008.
MR2467597

\bibitem{Dickson}
L.~E. Dickson.
\newblock {\em History of the {T}heory of {N}umbers},
vol.~2, Chelsea, New York, 1952.

\bibitem{DB37}
H.~L. Dorwart and O.~E. Brown.
\newblock The {T}arry--{E}scott problem,
{\em Amer. Math. Monthly}, 44(10):613--626--400, 1937.
MR1524120

\bibitem{FM17}
M.~Filaseta and M.~Markovich.
\newblock Newton polygons and the {P}rouhet--{T}arry--{E}scott problem,
{\em J. Number Theory}, 174:384--400, 2017.
MR3597397

\bibitem{Gloden}
A.~Gloden.
\newblock {\em Mehrgradige Gleichungen},
P.~Noorhoff, Groningen, 2nd ed., 1944.
MR0019638

\bibitem{Hua}
L.~K. Hua.
\newblock {\em Introduction to Number Theory},
Springer, Berlin, 1982.
MR665428

\bibitem{Kleiman}
H.~Kleiman.
\newblock A note on the {T}arry--{E}scott problem,
{\em J. Reine Angew. Math.}, 278:48--51, 1975.
MR389758

\bibitem{Letac42}
A.~Letac.
\newblock {\em Gazeta Matematica},
48:68--69, October 1942.

\bibitem{ReesSmyth}
E.~Rees and C.~Smyth.
\newblock On the constant in the {T}arry--{E}scott problem,
in {\em Cinquante Ans de Polyn\^omes (Paris, 1988)},
M.~Langevin and M.~Waldschmidt, eds.,
vol. 1415 of {\em Lecture Notes in Math.}, pp. 196--208, Springer, Berlin, 1990.
MR1044114

\bibitem{Sage}
{\noopsort{Sage}}{The Sage Developers}.
\newblock Sage{M}ath, the {S}age {M}athematics {S}oftware {S}ystem (ver. 8.4),
2018.
\newblock \texttt{http://www.sagemath.org}.

\bibitem{Smyth91}
C.~J. Smyth.
\newblock Ideal $9$th-order multigrades and {L}etac's elliptic curve,
{\em Math. Comp.}, 57(196):817--823, 1991.
MR1094960

\bibitem{Wright59}
E.~M. Wright.
\newblock Prouhet's 1851 solution of the {T}arry--{E}scott problem of 1910,
{\em Amer. Math. Monthly}, 66:199--201, 1959.
MR104622

\end{thebibliography}

\providecommand{\noopsort}[1]{}

\end{document}